\documentclass[12pt,reqno]{amsart}
\usepackage{moreverb}

\usepackage{epsfig,color}
\usepackage{amsmath,amssymb,amsfonts}
\usepackage{subfigure,psfrag}
\usepackage{ifthen}
\usepackage{epstopdf}
\usepackage{fullpage}

\definecolor{darkblue}{rgb}{0,0,.8}
\definecolor{darkred}{rgb}{0.8,0,0}

\usepackage{hyperref}
\hypersetup{
  pdftex=true,
  colorlinks=true,
  allcolors = darkred,
  linkcolor=darkblue, 
  menucolor=darkblue,
  urlcolor = darkblue,
  pagecolor=darkblue,
  frenchlinks=false,
  pdfborder={0 0 0},
  naturalnames=false,
  hypertexnames=false,
  breaklinks
}

\usepackage[capitalize]{cleveref}
\crefname{section}{section}{sections}
\crefname{subsection}{subsection}{subsections}

\Crefname{figure}{Figure}{Figures}

\crefformat{equation}{\textup{#2(#1)#3}}
\crefrangeformat{equation}{\textup{#3(#1)#4--#5(#2)#6}}
\crefmultiformat{equation}{\textup{#2(#1)#3}}{--\textup{#2(#1)#3}}
{, \textup{#2(#1)#3}}{, and \textup{#2(#1)#3}}
\crefrangemultiformat{equation}{\textup{#3(#1)#4--#5(#2)#6}}%
{ and \textup{#3(#1)#4--#5(#2)#6}}{, \textup{#3(#1)#4--#5(#2)#6}}{, and \textup{#3(#1)#4--#5(#2)#6}}

\Crefformat{equation}{#2Equation~\textup{(#1)}#3}
\Crefrangeformat{equation}{Equations~\textup{#3(#1)#4--#5(#2)#6}}
\Crefmultiformat{equation}{Equations~\textup{#2(#1)#3}}{ and \textup{#2(#1)#3}}
{, \textup{#2(#1)#3}}{, and \textup{#2(#1)#3}}
\Crefrangemultiformat{equation}{Equations~\textup{#3(#1)#4--#5(#2)#6}}%
{ and \textup{#3(#1)#4--#5(#2)#6}}{, \textup{#3(#1)#4--#5(#2)#6}}{, and \textup{#3(#1)#4--#5(#2)#6}}

\crefdefaultlabelformat{#2\textup{#1}#3}



\setlength{\parindent}{0pt}
\definecolor{darkgreen}{rgb}{0,0.5,0}
\definecolor{burgundy}{rgb}{0.5, 0.0, 0.13}

\newtheorem{theorem}{Theorem}
\newtheorem{lemma}[theorem]{Lemma}
\newtheorem{definition}[theorem]{Definition}

\newtheorem{remark}[theorem]{Remark}

\def\R{\mathbb{R}} 
\def\set#1#2{\left\{#1\,\big|\,#2\right\}}
%
\newcommand{\norm}[3][]{#1\|#2#1\|_{#3}}
\newcommand{\enorm}[3][]{#1|\hspace*{-.6mm}#1|\hspace*{-.6mm}#1|#2#1|
                         \hspace*{-.6mm}#1|\hspace*{-.6mm}#1|_{#3}}
\def\Lop#1#2{L\big(#1;#2\big)}

\newcommand{\spl}[3]{(#1,#2)_{#3}}
\newcommand{\spe}[3]{\langle#1,#2\rangle_{#3}}

\newcommand{\jump}[2][]{%
 \ifthenelse{\equal{#1}{}}
  {[\hspace*{-1.8mm}[\,#2\,]\hspace*{-1.8mm}]}%
 {\left[\hspace*{-1.2mm}\left[#2\right]\hspace*{-1.2mm}\right]}%
}
\def\I{\mathbf{I}} 


\def\A{\mathbf{A}} 
\def\b{\mathbf{b}}
\def\r{c}
\def\u{\mathbf{u}} 
\def\v{\mathbf{v}} %

\def\TT{\mathcal{T}} 
\def\DD{\mathcal{D}}  
\def\NN{\mathcal{N}} 

\def\EE{\mathcal{E}} 
\def\EEr{\EE_{\Gamma}} 
\def\EEi{\EE_I} 
\def\EEt{\EE_T} 

\def\AA{\mathcal{A}}
\def\BB{\mathcal{B}}
\def\HH{\mathcal{H}}
\def\OO{\mathcal{O}}

\def\normal{\mathbf{n}}

\def\CC{\mathcal{C}} 
\def\PP{\mathcal{P}} 

\def\VV{\mathcal{V}} 
\def\KK{\mathcal{K}} 
\def\SS{\mathcal{S}} 

\def\II{\mathcal{I}} 
\def\IIh{\II_h} 

\def\diam{{\operatorname{diam}}}
\def\div{\operatorname{div}}
\def\loc{{\operatorname{\ell oc}}}

\title[Adaptive non-symmetric FVM-BEM]
      {An adaptive non-symmetric finite volume and boundary element coupling method for a
       fluid mechanics interface problem}

\author{Christoph Erath}
\address{TU Darmstadt, Department of Mathematics, Dolivostra\ss{}e 15, 64293 Darmstadt, Germany}
\email{erath@mathematik.tu-darmstadt.de}

\thanks{C. Erath (corresponding author): TU Darmstadt, Germany; erath@mathematik.tu-darmstadt.de}
\thanks{R. Schorr: TU Darmstadt, Germany; schorr@gsc.tu-darmstadt.de}

\author{Robert Schorr}
\address{TU Darmstadt, Graduate School of Computational Engineering, Dolivostra\ss{}e 15, 64293 Darmstadt, Germany}
\email{schorr@gsc.tu-darmstadt.de}

\date{\bf\today}
%

\begin{document}

\begin{abstract}
We consider an interface problem often arising in transport problems: a coupled 
system of partial differential equations with one (elliptic) transport equation on a 
bounded domain and one equation (in this case the Laplace problem) on the complement, an 
unbounded domain. 
Based on the non-symmetric coupling of the finite volume method and boundary element
method of~\cite{Erath:2015-2}
we introduce a robust residual error estimator. 
The upper bound of the error in an energy (semi)norm 
is robust against variation of the model data. 
The lower bound, however, additionally
depends on the P\'eclet number.
In several examples we use the local contributions of the a~posteriori error estimator 
to steer an adaptive mesh-refining algorithm.
The adaptive FVM-BEM coupling turns out to be an efficient method especially to 
solve problems from fluid mechanics, mainly because of the local flux 
conservation and the stable approximation of convection dominated problems.\\[0.5\baselineskip]

\noindent \textbf{Keywords.} finite volume method, boundary element method, 
non-symmetric coupling,
convection dominated, robust a~posteriori error estimates, adaptive mesh refinement\\[0.5\baselineskip]

\noindent \textbf{Mathematics subject classification.}
65N08, 65N38, 65N15, 65N50, 76M12, 76M15
\end{abstract}

\maketitle
\section{Introduction and model problem}

We consider the prototype for flow and transport in porous media in an
interior domain and a homogeneous diffusion process in the corresponding unbounded exterior
problem. To approximate such problems the coupling of the finite volume method (FVM) 
and the boundary element method (BEM) is of particular interest.
For the 
vertex-centered FVM-BEM we refer to~\cite{Erath:2012-1} and
for the cell-centered FVM-BEM to~\cite{Erath:2013-2}.
Note that the coupling of FVM and BEM conserves mass, provides a stable approximation
also for convection dominated problems (option of an upwind stabilization) in the interior domain, 
and avoids the truncation of the unbounded exterior domain due to a transformation 
of the exterior problem into
an integral equation.
We can also interpret the model that the (unbounded) exterior problem ``replaces'' 
the (unknown) boundary conditions of the interior problem~\cite[Remark 2.1]{Erath:2012-1}. 
Recently, the non-symmetric vertex-centered FVM-BEM coupling approach
was introduced in~\cite{Erath:2015-2}, which results in a smaller system
of linear equations than the previous three field coupling approach cited above. 
However, a~posteriori estimators for this kind of 
FVM-BEM coupling were not developed.
Note that for uniform mesh refinement, optimal convergence order can only be guaranteed
if the solution has enough regularity~\cite{Erath:2015-2}, which is usually not met in practice.
Computable local contributions of a~posteriori estimators can be used
to refine a mesh for a numerical scheme, where the error appears to be large and thus 
might lead to an improved convergence rate.

In general, a~posteriori estimators bound the error from above (reliability) and 
below (efficiency). Probably the most widespread a~posteriori estimates are of residual type; 
see, e.g.,~\cite{Verfurth:book-1996} for a survey in the context of finite element methods (FEM)
for the Poisson problem.
Estimators for FEM-BEM couplings are also well-established. In~\cite{Aurada:2013-1}
the authors provide a good overview of residual-based a~posteriori
estimates for different FEM-BEM coupling strategies, also for a non-symmetric FEM-BEM
coupling, but only for a diffusion operator. Since we consider a convection diffusion reaction
problem, we have a special focus on robust estimates, i.e., estimates which should not depend on 
the variation of the diffusion, dominated convection and reaction.
Additionally, we do not assume a strong coerciveness assumption for the convection reaction terms. 
Note that the estimates have to be done in a certain energy (semi)norm.
Therefore, the ellipticity estimate for a stabilized bilinear form of the problem 
from~\cite[Theorem 4]{Erath:2015-2}
(or~\cite{Of:2013-1,Aurada:2013-1} for pure diffusion problems) is not directly 
applicable since the dependency of the constant can not be stated explicitly 
for an estimate in the energy (semi)norm. Thus, we prove an ellipticity estimate in 
the energy (semi)norm for the
original bilinear form in \cref{lem:robustellipt}. This estimate is only valid
if the minimal eigenvalue is bigger than a computable bound, which
depends on an arbitrary but fixed $\varepsilon\in (0,1)$ and the contraction constant of the double
layer integral operator. Similar to the discussion 
in~\cite{Of:2013-1,Aurada:2013-1,Erath:2015-2} this seems to be a theoretical restriction. 
Finally, our constant of the ellipticity estimate depends on the minimal eigenvalue
of the diffusion matrix and $\varepsilon$. However, if we know the minimal eigenvalue
we can calculate the constant explicitly. Hence, in the following we consider this
estimate as robust having chosen the diffusion big enough; see \cref{rem:robreliability}. 
The proof of reliability relies on a robust interpolation operator known from
the finite element literature~\cite{Petzoldt:2002-1}. Note that the diffusion distribution
has to be quasi-monotone over a primal triangulation. Thus, to simplify notation,
we present the robust estimator only for piecewise constant diffusion.
Contrary to the analysis in~\cite{Erath:2013-1} the proof starts with the 
robust ellipticity estimate. Since our system does not provide a ``global'' 
Galerkin orthogonality
(in contrast to a classical FEM-BEM coupling)
we use an $L^2$-orthogonality property of the residual to integrate a piecewise constant
approximation of the error and add and substract the robust interpolation of the error. 
This allows us to use some robust estimates of residual and jump terms; see~\cite{Erath:2013-1}.
Furthermore, the Galerkin orthogonality of the BEM part and some standard localizations
complete the proof and show \cref{th:reliabilityfvem}. 
Note that the fully computable, robust local refinement indicators 
consist of a residual and normal jump quantities (including jump terms on the coupling boundary) 
with factors, which ensure robustness.
A tangential jump measures the error in tangential direction on the coupling boundary.
The upwind stabilization adds an additional quantity to our estimator, which measures
the amount of upwinding. 
To complete the theory we also state a non-robust version of an estimator in \cref{th:reliabilitynonrobust}. 
There, we directly use the ellipticity estimate of~\cite[Theorem~4]{Erath:2015-2}
for a stabilized bilinear form. 
As in~\cite{Aurada:2013-1} for a pure diffusion operator we show that
this stabilized bilinear form evaluated for the errors is equal to the standard bilinear form.
The rest of the proof is standard and follows the lines above using
non-robust techniques such as the classical Cl\'ement nodal interpolant~\cite{Clement:1975-1}.
We remark that in this case the quasi-monotonicity of the diffusion is not necessary.

The efficiency follows mostly from~\cite{Erath:2013-1} and is stated in \cref{th:efficiency}. 
Therefore, we only present
the local estimate from a contribution which differs.
In summary, the estimator is local and, in case of a quasi-uniform mesh on the
boundary, also generically efficient. We stress that even for the FEM-BEM coupling there is
no better result available in the literature~\cite{Aurada:2013-1}. 
However, the lower bound is indeed robust against
discontinuities of the diffusion coefficient and a dominating reaction term but
still depends on the local P\'eclet number for convection problems.
This property is typical for estimates in the energy norm.
To get fully robustness one would have to introduce additionally an augmented norm, which
absorbs the convection terms. We note, however, that this norm is not computable and 
we could not prove an upper bound for this extended norm because we do not have homogeneous
Dirichlet boundary conditions.
For more details we refer to~\cite[Remark 6.1.]{Erath:2013-1}.

Throughout, we denote by $L^m(\cdot)$ and $H^m(\cdot)$, $m>0$, the standard Lebesgue 
and Sobolev spaces equipped with the corresponding norms $\norm{\cdot}{L^2(\cdot)}$ and
$\norm{\cdot}{H^m(\cdot)}$. 
Our domain $\Omega\subset \R^d$, $d=2,3$ will be a bounded domain with 
connected polygonal Lipschitz boundary $\Gamma$.
For $\omega\subset\Omega$, $\spl{\cdot}{\cdot}{\omega}$ is the $L^2$ scalar product. The space $H^{m-1/2}(\Gamma)$
is the space of all traces of functions from $H^m(\Omega)$ and the duality
between $H^{m}(\Gamma)$ and $H^{-m}(\Gamma)$ is given by the extended 
$L^2$-scalar product $\spe{\cdot}{\cdot}{\Gamma}$. 
In $H^1_{\loc}(\Omega):=\set{v:\Omega\to\R}{v|_K\in H^1(K), \, \text{for all } K\subset \Omega \,\mbox{open and bounded}}$ 
we collect all 
functions with local $H^1$ behavior. Furthermore, the Sobolev space $W^{1,\infty}$ contains exactly the Lipschitz 
continuous functions. If it is clear from the context, we do not use a notational 
difference for functions in a domain and their traces.
To simplify the presentation we equip the 
space $\HH:=H^1(\Omega)\times H^{-1/2}(\Gamma)$
with the norm
\begin{align*}
	\norm{\v}{\HH}^2:=\norm{v}{H^1(\Omega)}^2+\norm{\psi}{H^{-1/2}(\Gamma)}^2
\end{align*}
for $\v=(v,\psi)\in\HH$.

This allows us to specify our model problem in detail.
Let $\Omega$  be defined as above
and let $\Omega_e=\R^d\backslash\overline{\Omega}$ be the corresponding
unbounded exterior domain. 
The coupling boundary 
$\Gamma=\partial \Omega= \partial\Omega_e$ is divided in an inflow and outflow part, 
namely $\Gamma^{in}:=\set{x\in\Gamma}{\b(x)\cdot\normal(x)<0}$ and
$\Gamma^{out}:=\set{x\in\Gamma}{\b(x)\cdot\normal(x)\geq 0}$, respectively, 
where $\normal$ is the normal vector on $\Gamma$ pointing outward with respect
to $\Omega$.

We consider the same model problem as in~\cite{Erath:2012-1,Erath:2013-2, Erath:2015-2}
which reads in a weak sense:
find $u\in H^1(\Omega)$ and 
$u_e \in H^1_{\loc}(\Omega_e)$ such that
\begin{subequations}
\label{eq:model}
\begin{alignat}{2}
\label{eq1:model}
   \div (-\A \nabla u + \b u)+\r u  &= f \quad & &\text{in }\Omega,\\
\label{eq2:model}
-\Delta u_e &=0 \quad & &\text{in } \Omega_e,\\
\label{eq3:model}
u_e(x)&=C_{\infty}\log|x|+\OO(1/|x|) \quad& &\text{for }|x|\to \infty,\quad d=2,\\
\label{eq3a:model}
u_e(x)&=\OO(1/|x|) \quad& &\text{for }|x|\to \infty,\quad d=3,\\
\label{eq4:model}
 u&=u_e+u_0 \quad & &\text{on } \Gamma, \\
 \label{eq5:model}
 ( \A \nabla u-\b u)\cdot\normal&=
 \frac{\partial u_e}{\partial \normal}+t_0\quad & &\text{on }  
 \Gamma^{in},\\
 \label{eq6:model}
 ( \A \nabla u)\cdot\normal&=
 \frac{\partial u_e}{\partial \normal}+t_0\quad & &\text{on } \Gamma^{out}.
\end{alignat}
\end{subequations}
The diffusion matrix $\A:\Omega\to\R^{d\times d}$ has piecewise Lipschitz continuous entries; i.e., entries in 
$W^{1,\infty}(T)$ for every $T\in\TT$, where $\TT$ is a mesh of $\Omega$ introduced below in 
\cref{subsec:triangulation}. 
Additionally, $\A$ is bounded, symmetric and uniformly positive definite, i.e.,
there exist positive constants $C_{\A,1}$ and $C_{\A,2}$
with
$C_{\A,1}|\mathbf{v}|^2\leq \mathbf{v}^T\A(x)\mathbf{v}
	        \leq C_{\A,2}|\mathbf{v}|^2$
for all $\mathbf{v}\in \R^d$ and almost every $x\in\Omega$.
The best constant $C_{\A,1}$ equals the infimum over $x\in \Omega$ of the minimum eigenvalue of $\A(x)$, 
which we will denote $\lambda_{\min}(\A)$.
Note that this includes coefficients $\A$ that are $\TT$-piecewise constant.
Furthermore, $\b\in W^{1,\infty}(\Omega)^d$
and $\r\in L^{\infty}(\Omega)$
satisfy the weak coerciveness assumption 
\begin{align}
  \label{eq:bcestimate1}
  \frac{1}{2}\div\b(x)+\r(x)\geq 0\quad \text{for almost every }x\in\Omega.
\end{align}
We stress that our analysis holds
for constant $\b$ and $\r=0$ as well.
Finally, we choose the right-hand side $f\in L^2(\Omega)$, 
and allow prescribed jumps $u_0\in H^{1/2}(\Gamma)$,
and $t_0\in H^{-1/2}(\Gamma)$. In the two dimensional
case we additionally assume $\diam(\Omega)<1$ which can always be achieved 
by scaling to ensure $H^{-1/2}(\Gamma)$ ellipticity 
of the single layer operator defined below.
The constant $C_{\infty}$ is unknown; see~\cite{McLean:2000-book,Erath:2012-1,Erath:2015-2} for possible
different radiation conditions.
The model problem~\cref{eq:model} admits a unique solution for both, the 
two and three dimensional case; see~\cite{Erath:2012-1}.

The content of this paper is organized as follows. 
\Cref{sec:integralweak} gives a short summary on integral equations and
the weak formulation of our model problem based on the non-symmetric coupling
approach. 
In \cref{sec:FVMBEM} we introduce the non-symmetric FVM-BEM coupling to solve our 
model problem.
\Cref{sec:estimator} introduces a robust a~posteriori error estimator and shows reliability
and efficiency.
Numerical experiments, found in \cref{sec:numerics},
confirm the theoretical findings. Some conclusions complete the work.
%
\section{Integral equation and weak coupling formulation}
\label{sec:integralweak}

We consider a weak form of the model problem~\cref{eq:model} in terms of boundary integral
operators~\cite{Erath:2015-2}. Then  
the coupling reads:
find 
$u\in H^1(\Omega)$, $\phi\in H^{-1/2}(\Gamma)$
such that 
\begin{subequations}
\begin{align}
\label{eq1:weakfem}
   \AA(u,v)-\spe{\phi}{v}{\Gamma}
   &= \spl{f}{v}{\Omega}+\spe{t_0}{v}{\Gamma},\\
   \label{eq2:weakfem} 
   \spe{\psi}{(1/2-\KK)u}{\Gamma}+\spe{\psi}{\VV\phi}{\Gamma}&=\spe{\psi}{(1/2-\KK)u_0}{\Gamma}
\end{align}
\end{subequations}
for all 
$v\in H^1(\Omega)$, $\psi\in H^{-1/2}(\Gamma)$ with the bilinear form 
\begin{align*}
  \AA(u,v):=\spl{\A\nabla u-\b u}{\nabla v}{\Omega}+\spl{\r u}{v}{\Omega}
+\spe{\b\cdot\normal \,u}{v}{\Gamma^{out}}.
\end{align*}
The single layer operator $\VV$ and the double layer
operator $\KK$ are given, for smooth enough input, by
\begin{align*}
(\VV \psi)(x)=\int_{\Gamma}\psi(y)G(x-y)\,ds_{y}
 	\qquad
(\KK \theta)(x)=
  \int_{\Gamma}\theta(y)\frac{\partial}{\partial 
  \normal_{y}}G(x-y)\,ds_{y} \qquad x\in \Gamma,
\end{align*}
where $\normal_y$ is a normal vector with respect to $y$ and
$G(z)=-\frac{1}{2\pi}\log|z|$ for the 2-D case
and $G(z)=\frac{1}{4\pi}\frac{1}{|z|}$ for the 3-D case is the fundamental solution for the Laplace operator.
We recall~\cite[Theorem 1]{Costabel:1988-1} that  these operators can be extended to bounded operators
\begin{align*}
\VV \in\Lop{H^{s-1/2}(\Gamma)}{H^{s+1/2}(\Gamma)},
	\qquad
\KK \in\Lop{H^{s+1/2}(\Gamma)}{H^{s+1/2}(\Gamma)},
	\quad s\in [-\tfrac12,\tfrac12].  
\end{align*}
It is also well-known that $\VV$ is symmetric and $H^{-1/2}(\Gamma)$ elliptic. The expression
\begin{align*}
\norm{\cdot}{\VV}^2:=\spe{\VV\cdot}{\cdot}{\Gamma}
\end{align*}
defines a norm in $H^{-1/2}(\Gamma)$. This norm is equivalent to $\norm{\cdot}{H^{-1/2}(\Gamma)}$.
In this work, we will also use the 
contraction constant $C_{\KK}\in[1/2,1)$ from~\cite{Steinbach:2001-1} for the double layer operator $\KK$.

%

%
For convenience the system~\cref{eq1:weakfem,eq2:weakfem} can be written in the product space $\HH=H^1(\Omega)\times H^{-1/2}(\Gamma)$ as follows: we introduce the bilinear form $\BB:\HH\times\HH\to \R$
\begin{align}
	\label{eq:Bfem}
	\BB((u,\phi);(v,\psi))&:=   \AA(u,v)-\spe{\phi}{v}{\Gamma}
      +\spe{\psi}{(1/2-\KK)u}{\Gamma}+\spe{\psi}{\VV\phi}{\Gamma},
\end{align}
and the linear functional
\begin{align}
	\label{eq:Ffem}
	F((v,\psi)):=  \spl{f}{v}{\Omega} +\spe{t_0}{v}{\Gamma}+\spe{\psi}{(1/2-\KK)u_0}{\Gamma}.  
\end{align}
Then~\cref{eq1:weakfem,eq2:weakfem} is equivalent to: find
$\u\in\HH$ such that
\begin{align}
	\label{eq:FEMBEM}
	\BB(\u;\v)=F(\v) \qquad \text{for all } \v\in \HH.
\end{align}

\section{A non-symmetric FVM-BEM coupling}
\label{sec:FVMBEM}
In this section we shortly present the non-symmetric FVM-BEM coupling discretization introduced in \cite{Erath:2015-2}. 
From now on we assume $t_0\in L^2(\Gamma)$.
First, let us introduce the notation for the triangulation and some discrete function
spaces.
\subsection{Triangulation}
\label{subsec:triangulation}
Throughout, $\TT$ denotes a triangulation or primal mesh of $\Omega$,
$\NN$ and $\EE$ are the corresponding set of nodes and edges/faces, respectively.
The elements $T\in\TT$ are non-degenerate triangles (2-D case) or tetrahedra (3-D case),
and considered to be closed.
For the Euclidean diameter of $T\in\TT$ we write 
$h_T:=\sup_{x,y\in T}|x-y|$.
Moreover, $h_E$ denotes the length of an edge or Euclidean diameter of $E\in\EE$.
The triangulation is regular
in the sense of Ciarlet~\cite{Ciarlet:1978-book},
i.e., the ratio of the diameter $h_T$ of any element $T\in\TT$ to the diameter
of its largest inscribed ball is bounded by a constant independent
of $h_T$, the so called shape-regularity constant. 
Additionally, we assume that the triangulation $\TT$ is aligned 
with the discontinuities
of the coefficients $\A$, $\b$, and $\r$ of the differential equation (if any) and of the data
$f$, $u_0$, and $t_0$. Throughout, if $\normal$ appears in a boundary
integral, it denotes the unit normal vector to the boundary pointing outward the
domain.
We denote by $\EEt\subset\EE$ the set of all edges/faces of $T$, i.e.,
$\EEt:=\set{E\in\EE}{E\subset \partial T}$ and by
$\EEr:=\set{E\in\EE}{E\subset\Gamma}$ the set of 
all edges/faces on the boundary $\Gamma$.
%
%
%
%
%
\begin{figure}[t]
  \centering
  \psfrag{v1}[c][c]{\small $V_{1}$}\psfrag{v2}[c][c]{\small $V_{2}$}
  \psfrag{v3}[c][c]{\small $V_{3}$}
  \psfrag{v4}[c][c]{\small $V_{4}$}\psfrag{v5}[c][c]{\small $V_{5}$}
  \psfrag{v6}[c][c]{\small $V_{6}$}
  \psfrag{v7}[c][c]{\small $V_{7}$}
  \psfrag{a1}[c][c]{\small $a_{1}$}\psfrag{a2}[c][c]{\small $a_{2}$}
  \psfrag{a3}[c][c]{\small $a_{3}$}
  \psfrag{a4}[c][c]{\small $a_{4}$}\psfrag{a5}[c][c]{\small $a_{5}$}
  \psfrag{a6}[c][c]{\small $a_{6}$}
  \psfrag{a7}[c][c]{\small $a_{7}$}  
  \subfigure[Constructions of $\TT^*$.]
  {\label{fig:dualconst}\includegraphics[height=0.4\textwidth]{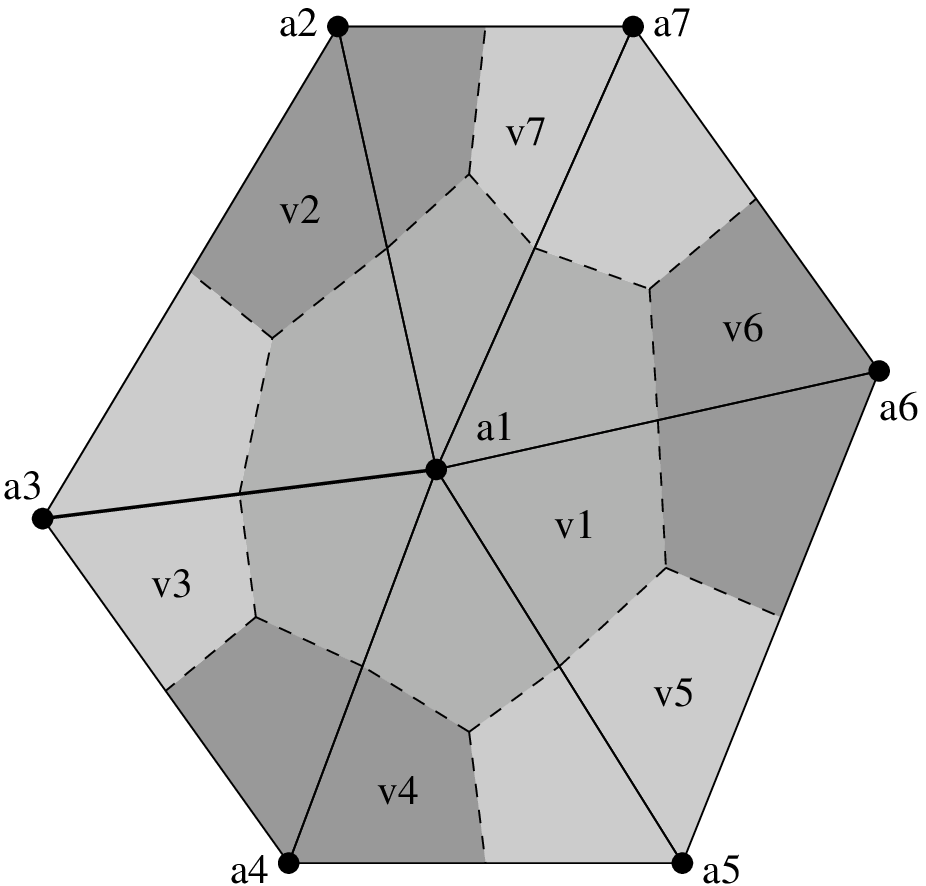}}
  \hspace*{0.1\textwidth}
  \psfrag{t17}[c][c]{\small $\tau_{17}$}
  \psfrag{t34}[c][c]{\small $\tau_{34}$}
  \subfigure[Edges for upwinding.]
  {\label{fig:edgeupwind}\includegraphics[height=0.4\textwidth]{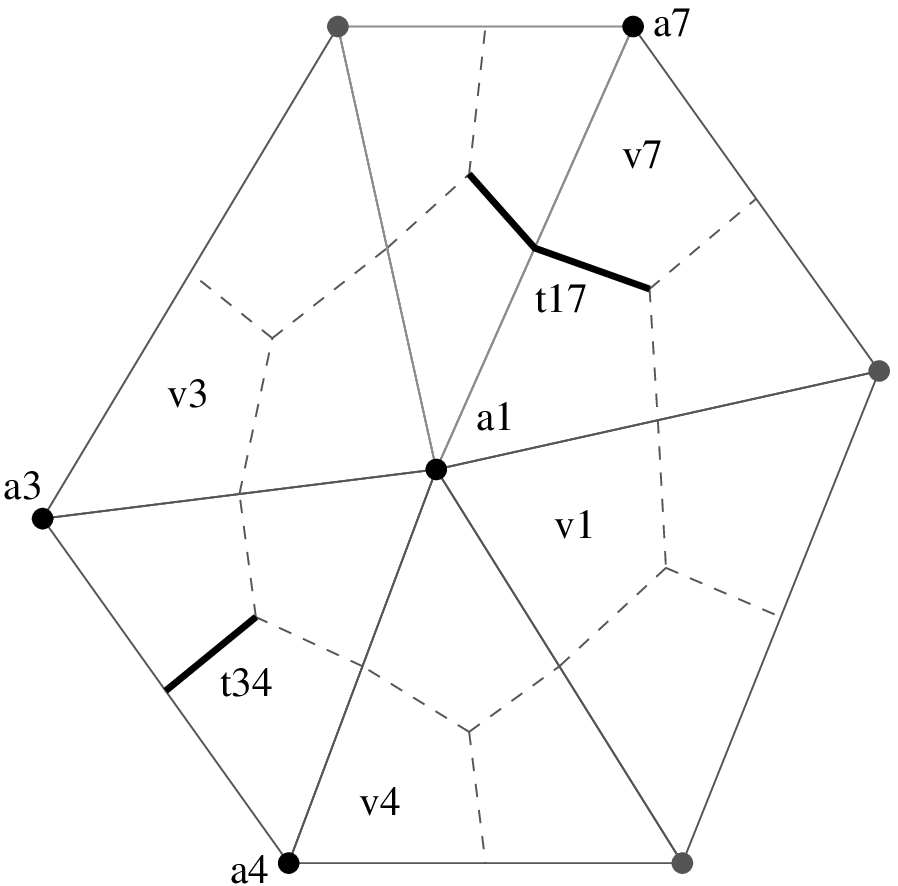}}
  \caption{
  The construction of the dual mesh $\TT^*$ from the
  primal mesh $\TT$ in two dimensions with the center of gravity point in
  the interior of the elements in Figure~\subref{fig:dualconst}; 
  the dashed lines (gray boxes) are the new control volumes $V_i$ of $\TT^*$ and are 
  associated with $a_i\in\NN$.
  In Figure~\subref{fig:edgeupwind} we see an example
  intersection $\tau_{17}=V_1\cap V_7\neq\emptyset$ of two neighboring cells
  $V_1,V_7 \in \TT^*$, where $\tau_{17}$ is the union of 
  two straight segments. For $a_3, a_4\in \NN$, where both $a_3$ and $a_4$ 
  lie on $\Gamma$, $\tau_{34}=V_3\cap V_4\neq\emptyset$ 
  is only a single segment.\label{fig:primaldual}}
\end{figure}
\subsection*{Dual mesh}
We construct the dual mesh $\TT^*$ from the primal mesh $\TT$ as follows.
In two dimensions we connect the center of gravity of an element $T\in\TT$ 
with the midpoint of the edges $E\in\EEt$; see 
\cref{fig:dualconst}, where 
the dashed lines are the new boxes, called control volumes. 
In three dimensions we connect the center of gravity of an element
$T\in\TT$ with the centers of gravity of the four faces $E\in\EEt$.
Furthermore, each center of gravity of a face $E\in\EEt$ is connected by straight lines
to the midpoints of its edges. The elements of this dual mesh $\TT^*$ are taken to be closed. Note that they are non-degenerate domains because of the non-degeneracy of the elements of the primal mesh.
Given a vertex $a_i\in\NN$ from the primal mesh $\TT$ ($i=1\ldots \#\NN$), there exists a unique box containing $a_i$. We thus number the elements of the dual mesh $V_i\in \TT^*$, following the numbering of vertices.
%
%
%
%
\subsection*{Discrete function spaces}
We define by
$\SS^1(\TT):=\set{v\in\CC(\Omega)}{v|_T \text{ affine for all } T\in\TT}$
the piecewise affine and globally continuous function space 
on $\TT$.
The space $\PP^0(\EEr)$ is the $\EEr$-piecewise constant function space. 
On the dual mesh $\TT^*$ we provide
$\PP^0(\TT^*):=\set{v\in L^2(\Omega)}{v|_V \text{ constant } V\in\TT^*}$.
With the aid of the characteristic function $\chi_i^*$ over the volume $V_i$ 
we can write $v_h^*\in\PP^0(\TT^*)$ as
\begin{align*}
  v_h^*=\sum_{x_i\in\NN}v_i^*\chi_i^*,
\end{align*}
with real coefficients $v_i^*$.
Furthermore, we define the $\TT^*$-piecewise constant interpolation operator
\begin{align}
  \label{eq:intoppiecewise}
  \IIh^*:\CC(\overline\Omega)\to\PP^0(\TT^*),\quad
  \IIh^*v:=\sum_{a_i\in\NN}v(a_i)\chi_i^*(x).
\end{align}

%
\subsection{The discrete system}
With these preparations made we can introduce the non-symmetric FVM-BEM coupling method, 
which reads:
find $u_h\in\SS^1(\TT)$ and $\phi_h\in\PP^0(\EEr)$ such that
\begin{subequations}
 \label{eq:fvm}
\begin{align}
\label{eq1:fvm}
  \AA_V(u_h,v_h)-\spe{\phi_h}{\IIh^*v_h}{\Gamma}&=\spl{f}{\IIh^*v_h}{\Omega}
  +\spe{t_0}{\IIh^*v_h}{\Gamma},\\
  \label{eq2:fvm}   
  \spe{\psi_h}{(1/2-\KK)u_h}{\Gamma}+\spe{\psi_h}{\VV\phi_h}{\Gamma}
  &=\spe{\psi_h}{(1/2-\KK)u_0}{\Gamma}	
\end{align}
\end{subequations}
for all $v_h\in\SS^1(\TT)$, $\psi_h\in\PP^0(\EEr)$
with the finite volume bilinear form $\AA_V:\SS^1(\TT)\times \SS^1(\TT)\to \mathbb R$ given by
\begin{align}
  \label{eq:fvmbilinear}
  \begin{split}
  \AA_V(u_h,v_h)&:=\sum_{a_i\in\NN}v_h(a_i)\bigg(
  \int_{\partial V_i\backslash\Gamma}(-\A \nabla u_h+\b u_h ) \cdot \normal\,ds\\
  &\qquad\qquad\quad+\int_{V_i}\r u_h\,dx 
  +\int_{\partial V_i\cap\Gamma^{out}}
	\b\cdot\normal\,u_h\,ds\bigg).
	\end{split}
\end{align}

A more detailed derivation can be found in~\cite{Erath:2015-2}.
 
\begin{remark}
Note that the trial and test spaces are in fact different.
\end{remark}
It is well known that the FVM with the central approximation of the convention term
leads to strong instabilities for convection dominated problems.
Finite volume schemes, however, allow an
easy upwind stabilization; see~\cite{Roos:1996-book}. 
Although there exist several upwinding possibilities, we focus 
on the classical full upwinding in this work.

If we want to apply an upwind scheme for the finite volume scheme,
we replace $\b u_h$ 
on the interior dual edges/faces $V_i\backslash \Gamma$
in $\AA_V$~\cref{eq:fvmbilinear} by an upwind approximation. Given $V_i\in \TT^*$, 
we consider the intersections with the neighboring boxes 
$\tau_{ij}=V_i\cap V_j\neq\emptyset$ for $V_j \in \TT^*$. 
Note that in two dimensions $\tau_{ij}$ is the union of two straight segments or 
(when the associated vertices $a_i, a_j\in \NN$ lie on $\Gamma$) a single segment; 
see \cref{fig:edgeupwind}. 
In three dimensions $\tau_{ij}$ consists of one or two polygonal surfaces.
We then compute the average
\begin{align*}
\beta_{ij}:=\frac1{|\tau_{ij}|}\int_{\tau_{ij}}\b\cdot\normal_i\,ds,
\end{align*}
where $\normal_i$ points outwards with respect to $V_i$. 
Then, 
the upwind value $u_{h,ij}$ defined by the classical (full) upwind scheme is

\begin{align}
  \label{eq:upwind}
  u_{h,ij}:=\begin{cases}
   \displaystyle
     u_h(a_j)\quad &\text{for } \beta_{ij}<0,\\[1mm]
   \displaystyle u_h(a_i) \quad &\text{for } \beta_{ij}\geq 0.
  \end{cases}
\end{align}

The analysis in this work also holds
for a weighted upwinding strategy which is used to reduce the 
excessive numerical diffusion; see~\cite{Roos:1996-book,Erath:2015-2}.

Whenever we apply an upwind scheme for the convection part, we
replace the finite volume bilinear form $\AA_V$ in~\cref{eq1:fvm} by
\begin{align}
  \label{eq:fvmbilinearup}
  \begin{split}
  \AA_V^{up}(u_h,v_h)&:=\sum_{a_i\in\NN}v_h(a_i)\bigg(
  \int_{\partial V_i\backslash\Gamma}-\A \nabla u_h \cdot \normal\,ds
  +\int_{V_i}\r u_h\,dx \\
  &\qquad\qquad\quad
  +\sum_{j\in\NN_i}\int_{\tau_{ij}}\b \cdot\normal \,u_{h,ij}\,ds
  +\int_{\partial V_i\cap\Gamma^{out}}
	\b\cdot\normal\,u_h\,ds\bigg).
	\end{split}
\end{align}
where $\NN_i$ denotes the index set of nodes in $\TT$ 
of all neighbors of $a_i \in\NN$.

\section{Residual based a~posteriori error estimator}
\label{sec:estimator}
In this section we will introduce an elementwise refinement indicator on which
our a~posteriori error estimator is based.
In order to do that we define the residual 
\begin{align}
  \label{eq:residuum}
  R:=R(u_h)=f-\div(-\A \nabla u_h +\b u_h)-\r u_h
  \quad\text{on } T\in\TT
\end{align}
and an edge/face-residual or jump $J:L^2(\EE)\to \R$ by
\begin{align}
  \label{eq:edgeJ}
  J|_E:=J(u_h)|_E=
  \begin{cases}\displaystyle
     \big[(-\A \nabla u_h)|_{E,T}-(-\A \nabla u_h)|_{E,T'}\big]\cdot\normal 
     & \begin{array}{ll} \text{for all }E\in\EEi\text{ with }\\ E=T\cap T', T,T'\in\TT \end{array}\\[1mm]
    \displaystyle (-\A\nabla u_h+\b u_h)\cdot\normal+\phi_h+t_0 & 
    \text{ for all }E\in\EEr^{in},\\[1mm]
  \displaystyle   -\A\nabla u_h\cdot\normal+\phi_h+t_0 & 
  \text{ for all }E\in\EEr^{out}.
  \end{cases}
\end{align}

Note that $\varphi_{E,T}$ denotes the trace of $\varphi\in H^1(T)$ on $E$
and the normal vector $\normal$ points from $T$ to $T'$.

\subsection{Robust a posteriori estimation}

For analytical investigations we define the energy (semi)norm
\begin{align}
   \label{eq:energynorm}
   \enorm{v}{\Omega}^2&:=\norm{\A^{1/2}\nabla v}{L^2(\Omega)}^2+
   \norm[\Bigg]{\left(\frac{1}{2}\div\b+\r\right)^{1/2}v}{L^2(\Omega)}^2
   \quad \text{for all }v\in H^1(\Omega).
\end{align}
We stress that there holds with~\cref{eq:bcestimate1} and $\b\cdot\normal \leq 0$ on $\Gamma^{in}$
\begin{align}
\label{eq:bilinearintcoerciv}
  \enorm{v}{\Omega}^2&\leq \AA(v,v).
\end{align}

The following lemma is the key observation for showing a robust upper estimate.

\begin{lemma}\label{lem:robustellipt}
Let us assume $0<\varepsilon<1$ and $(1-\varepsilon)\lambda_{\min}(\A) - \frac{1}{4} C_{\KK} > 0$.
For all $\v=(v,\psi)\in \HH$ there holds
\begin{align}
 \label{eq:robustelipt}
  \BB(\v;\v)  &\geq
  \min\left\{ \varepsilon, C_{\rm harm}\right\}\Big(\enorm{v}{\Omega}^2
  + \norm{\psi}{\VV}^2\Big)
\end{align}	
with the constant
 \begin{align*}  
 C_{\rm harm}=\frac{1}{2}  \left[  
          (1-\varepsilon)\lambda_{\min}(\A) +1  
          - \sqrt{ ((1-\varepsilon)\lambda_{\min}(\A)-1)^2+C_{\KK}}
        \right] 
\end{align*}
and the contraction constant $C_{\KK}\in[1/2,1)$.
\end{lemma}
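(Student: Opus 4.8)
The plan is to evaluate $\BB(\v;\v)$ directly using the definition \cref{eq:Bfem} and see which terms cancel. Writing out $\BB((v,\psi);(v,\psi)) = \AA(v,v) - \spe{\psi}{v}{\Gamma} + \spe{\psi}{(1/2-\KK)v}{\Gamma} + \spe{\psi}{\VV\psi}{\Gamma}$, the $-\spe{\psi}{v}{\Gamma}$ term combines with $\tfrac12\spe{\psi}{v}{\Gamma}$ from the double-layer block to leave $-\tfrac12\spe{\psi}{v}{\Gamma} - \spe{\psi}{\KK v}{\Gamma} + \norm{\psi}{\VV}^2$. So the off-diagonal coupling contributes a term of the form $-\spe{\psi}{(1/2+\KK)v}{\Gamma}$ (up to sign conventions I'd pin down from \cref{eq2:weakfem}), which must be controlled by a fraction of $\AA(v,v)$ and a fraction of $\norm{\psi}{\VV}^2$.

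The key analytic input is the contraction estimate for the double-layer operator: from the constant $C_{\KK}\in[1/2,1)$ one has the standard bound $\norm{(1/2+\KK)v}{\VV^{-1}} \le C_{\KK}^{1/2}\,\|v\|$ in an appropriate trace norm, or equivalently a Cauchy–Schwarz/Young estimate
\begin{align*}
  \big|\spe{\psi}{(1/2+\KK)v}{\Gamma}\big| \le \tfrac{C_{\KK}}{4\delta}\,\norm{\A^{1/2}\nabla v}{L^2(\Omega)}^2\;(\text{roughly}) + \delta\,\norm{\psi}{\VV}^2
\end{align*}
for a free parameter $\delta>0$; the precise form is the one used in \cite{Of:2013-1,Aurada:2013-1,Erath:2015-2}. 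First I would split $\AA(v,v)$: using \cref{eq:bilinearintcoerciv} we have $\AA(v,v)\ge \enorm{v}{\Omega}^2$, but I actually want to keep an $\varepsilon$-fraction of the energy in reserve, so I would write $\AA(v,v) = (1-\varepsilon)\AA(v,v) + \varepsilon\AA(v,v) \ge (1-\varepsilon)\AA(v,v) + \varepsilon\enorm{v}{\Omega}^2$ — and more carefully use that $(1-\varepsilon)\AA(v,v) \ge (1-\varepsilon)\lambda_{\min}(\A)\norm{\nabla v}{L^2(\Omega)}^2$ to have a clean $H^1$-seminorm quantity to absorb the coupling term against. Then the residual bilinear expression to bound below is
\begin{align*}
  (1-\varepsilon)\lambda_{\min}(\A)\,\norm{\nabla v}{L^2(\Omega)}^2 + \norm{\psi}{\VV}^2 - \big|\spe{\psi}{(1/2+\KK)v}{\Gamma}\big|.
\end{align*}

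Applying the contraction/Young estimate turns this into a quadratic form in the two quantities $X := \norm{\nabla v}{L^2(\Omega)}$ and $Y := \norm{\psi}{\VV}$, namely something like $(1-\varepsilon)\lambda_{\min}(\A)X^2 + Y^2 - C_{\KK}^{1/2}XY$, whose smallest eigenvalue (relative to $X^2+Y^2$) is exactly $\tfrac12\big[(1-\varepsilon)\lambda_{\min}(\A)+1 - \sqrt{((1-\varepsilon)\lambda_{\min}(\A)-1)^2 + C_{\KK}}\big] = C_{\rm harm}$; the positivity of this eigenvalue is precisely the hypothesis $(1-\varepsilon)\lambda_{\min}(\A) - \tfrac14 C_{\KK} > 0$ (one checks the discriminant of the $2\times2$ matrix). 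Collecting: the $\varepsilon\enorm{v}{\Omega}^2$ piece plus the $C_{\rm harm}(X^2+Y^2)$ piece gives the claimed lower bound with constant $\min\{\varepsilon, C_{\rm harm}\}$, after noting $X^2 = \norm{\nabla v}{L^2(\Omega)}^2$ dominates the diffusion part of $\enorm{v}{\Omega}^2$ up to the $\lambda_{\min}(\A)$ already accounted for — so actually I would be a little careful to bookkeep the energy norm versus the bare gradient seminorm and make sure the reaction part $\norm{(\tfrac12\div\b+\r)^{1/2}v}{L^2(\Omega)}^2$ is carried entirely by the $\varepsilon\AA(v,v)$ reserve. The main obstacle is this bookkeeping: getting the exact form of the double-layer contraction inequality consistent with the sign conventions in \cref{eq2:weakfem}, and splitting $\AA(v,v)$ so that exactly the eigenvalue $C_{\rm harm}$ emerges rather than some weaker constant — i.e. matching the constants precisely rather than merely proving coercivity.
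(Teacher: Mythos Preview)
Your approach is essentially the same as the paper's. Two clarifications fill the gaps you flagged. First, the precise contraction estimate you want is
\[
\spe{\psi}{(1/2+\KK)v}{\Gamma}\le C_{\KK}^{1/2}\,\spe{S^{\text{int}}v}{v}{\Gamma}^{1/2}\,\norm{\psi}{\VV},
\]
where $S^{\text{int}}:=\VV^{-1}(1/2+\KK)$ is the interior Steklov--Poincar\'e operator; the paper then applies \emph{harmonic splitting} $\norm{\nabla v}{L^2(\Omega)}^2=\norm{\nabla v_0}{L^2(\Omega)}^2+\spe{S^{\text{int}}v}{v}{\Gamma}$ so that the quadratic form is in $s:=\spe{S^{\text{int}}v}{v}{\Gamma}^{1/2}$ and $Y:=\norm{\psi}{\VV}$, not in $X=\norm{\nabla v}{L^2(\Omega)}$ --- your version amounts to using $s\le X$ directly in the cross term, which is fine and yields the same $C_{\rm harm}$. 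Second, your bookkeeping worry at the end is misplaced: $C_{\rm harm}X^2$ need not contribute to $\enorm{v}{\Omega}^2$ at all (indeed $X^2$ is the \emph{wrong} direction relative to $\norm{\A^{1/2}\nabla v}{L^2}^2$); simply drop $C_{\rm harm}X^2\ge 0$ and let the $\varepsilon$-reserve carry the full energy seminorm, noting that the paper keeps the entire reaction term $\norm{(\tfrac12\div\b+\r)^{1/2}v}{L^2(\Omega)}^2$ unsplit and then bounds $\varepsilon\norm{\A^{1/2}\nabla v}{L^2}^2+\norm{(\tfrac12\div\b+\r)^{1/2}v}{L^2}^2\ge \varepsilon\enorm{v}{\Omega}^2$ since $\varepsilon<1$.
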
 
\begin{proof}
The proof is similar to the proof in~\cite[Theorem 4]{Erath:2015-2}.
Thus we only sketch the steps that differ.
In the following we denote by $S^{\text{int}}:=\VV^{-1}(1/2+\KK)$ 
the Steklov--Poincar\'e operator, i.e., the Dirichlet to Neumann
map of the interior Laplace problem.
Let $\v=(v,\psi)\in \HH$ be arbitrary.
Thus, \cref{eq:Bfem}, the contractivity property $\spe{\psi}{(1/2+\KK)v}{\Gamma}
  \leq C_{\KK}^{1/2}\spe{S^{\text{int}} v}{v}{\Gamma}^{1/2}\norm{\psi}{\VV}$,
the ellipticity~\cref{eq:bilinearintcoerciv} 
of $\AA(v,v)$ in the (semi)energy norm~\cref{eq:energynorm}, and
the ellipticity of $\VV$
lead to

\begin{align*}
  \BB(\v;\v)  &= 
  \AA(v,v)
  + \spe{\psi}{\VV\psi}{\Gamma}
  - \spe{\psi}{(1/2+\KK)v}{\Gamma}\\  
  &\geq
  \norm{\A^{1/2}\nabla v}{L_2(\Omega)}^2
  +\norm{((\div\b)/2+\r)^{1/2}v}{L_2(\Omega)}^2
  + \norm{\psi}{\VV}^2
   - C_{\KK}^{1/2}\spe{S^{\text{int}} v}{v}{\Gamma}^{1/2}\norm{\psi}{\VV}\\
\end{align*}	
Next, for $0<\varepsilon<1$ we split 
$\norm{\A^{1/2}\nabla v}{L_2(\Omega)}^2=
\varepsilon\norm{\A^{1/2}\nabla v}{L_2(\Omega)}^2+(1-\varepsilon)\norm{\A^{1/2}\nabla v}{L_2(\Omega)}^2$.
With harmonic splitting we build a quadratic form as in~\cite[Theorem 4]{Erath:2015-2}.
Thus, under the assumption that $(1-\varepsilon)\lambda_{\min}(\A) - \frac{1}{4} C_{\KK} > 0$,
$\spe{S^{\text{int}} v}{v}{\Gamma}\geq 0$, 
and with the constant $C_{\rm harm}$
we estimate

\begin{align*}
  \BB(\v;\v)  &\geq
  \varepsilon\norm{\A^{1/2}\nabla v}{L_2(\Omega)}^2
  +\norm{((\div\b)/2+\r)^{1/2}v}{L_2(\Omega)}^2
  + C_{\rm harm}\norm{\psi}{\VV}^2,
\end{align*}	
which proves the assertion.
\end{proof}

Note that~\cref{eq:robustelipt} allows us to prove a robust upper bound. 
However, the diffusion distribution in $\Omega$
has to be quasi-monotone to apply a robust interpolant; 
see also~\cite{Petzoldt:2002-1} in the context of an FEM estimator 
and~\cite{Erath:2013-1} for an FVM-BEM estimator. 
To simplify notation, we restrict ourself to a piecewise constant 
diffusion coefficient $\alpha\in\PP^0(\TT)$ with $\A=\alpha\I$.
Let us suppose that $\Omega$ can be partitioned into a finite number $L$ of open disjoint
subdomains $\Omega_{\ell}$, $1\leq\ell\leq L$ such that the function $\alpha$ is 
equal to a constant $\alpha_{\ell}\in\R$ on each $\Omega_{\ell}$ and the 
triangulation $\TT$ of $\Omega$ fits to $\Omega_{\ell}$; i.e., 
$\partial \Omega_{\ell}$ consists of edges of the underlying triangulation. 
Thus, for two subdomains $\Omega_{k},\Omega_{\ell}$ with 
$\partial\Omega_{k}\cap\partial\Omega_{\ell}\not=\emptyset$ 
we may assume $\alpha_k\not=\alpha_{\ell}$. Otherwise, one can merge $\Omega_k$ and 
$\Omega_{\ell}$ with $\alpha_k=\alpha_{\ell}$ to a new subdomain.

For the $\TT$-piecewise constant function $\alpha\in\PP^0(\TT)$ we write
\begin{align*}
  \alpha_T:=\alpha|_T\quad \text{for all }T\in\TT,
\end{align*}
which obviously gives $\alpha_T=\alpha_{\ell}$ in $\Omega_{\ell}$.

With the definition of the patch of a node $a \in \mathcal{N}$ via
\begin{align*}
 \omega_{a} := \bigcup_{T\in \widetilde{\omega}_{a}} T \quad \text{with }
 \widetilde{\omega}_{a}:= \set{T\in \TT}{a\in \partial T},
\end{align*} 
we can define the set
\begin{align*}
 Q_a:=\bigcup_{T\in\widetilde Q_a}T,
 \quad\text{where}\quad
 \widetilde Q_a :=\set{T \in \widetilde\omega_a}{\alpha_T\geq \alpha_{T'}, 
 \text{ for all }T'\in\widetilde\omega_a}.
\end{align*}
Note that $Q_a$ denotes the union of all simplexes $T\in\widetilde\omega_a$ 
for $a\in\NN$, where $\alpha_T$ achieves a maximum.

\begin{definition}[Quasi-monotonicity~\cite{Petzoldt:2002-1,Erath:2013-1}]
  \label{def:quasimonotone}
  Let $a\in\NN$. We say $\alpha$ is quasi-monotone in $\omega_a$ with respect to
  $a$, if for all elements $T\in\widetilde\omega_a$ there exists a simply 
  connected set $Q_{a,T}$ with $T\cup Q_a\subset Q_{a,T}\subset\omega_a$ such that
  $\alpha_T\leq\alpha_{T'}$ for all $T'\subset Q_{a,T}$, $T'\in \widetilde\omega_a$.
  We call $\alpha$ quasi-monotone, if $\alpha$ is quasi-monotone for all $a\in\NN$.
\end{definition}  

The definitions of~\cite{Petzoldt:2002-1} and~\cite{Erath:2013-1} slightly
differ, since the coupling does not have a Dirichlet boundary. 
This allows us to define a robust nodal interpolant in the sense of~\cite{Petzoldt:2002-1};
\begin{align}
  \label{eq:intopaffine}
  \IIh:H^1(\Omega)\to \SS^1(\TT), \quad \IIh v:=\sum_{a\in\NN} \Pi_a v \,\eta_a,
\end{align}
well known in the context of the finite element method.
Here $\eta_a$ is the standard nodal linear basis function
associated with the node $a$.
The linear and continuous operator $\Pi_a:H^1(\omega)\to \R$ on a domain $\omega\subset\Omega$ 
for a diffusion coefficient with a quasi-monotone
distribution reads
\begin{align*}
 \Pi_a v := \frac{1}{|Q_a|}\int_{Q_a} v\,dx.
\end{align*} 

Before we can introduce a robust refinement indicator, we need some more notation:
First, we define
\begin{alignat*}{2}
\alpha_E&:=\max\big\{\alpha_{T_1},\alpha_{T_2}\big\}  \quad
&&\text{for }E\in\EEi \text{ with } E\subset T_1\cap T_2,\\
 \alpha_E&:=\alpha_T\quad
  &&\text{for }E\in\EEr \text{ with } E\subset \partial T.
\end{alignat*}

%
Besides $\alpha_E$ we define additional quantities; i.e.,
\begin{alignat*}{2}
  \beta_T&:=\min_{x\in T}\Big\{\frac{1}{2}\div\b(x)+\r(x)\Big\}
  \quad
  &&\text{for all } T\in\TT,\\
  \beta_E&:=\min\big\{\beta_{T_1},\beta_{T_2}\big\}\quad
   &&\text{for } E\in\EEi \text{ with }E\subset T_1\cap T_2,\\
  \beta_E&:=\beta_T\quad
   &&\text{for }E\in\EEr \text{ with } E\subset \partial T.
\end{alignat*}
Next, we define  
$\mu_T:=\min\big\{\beta_T^{-1/2},h_T \alpha_T^{-1/2}\big\}$ for all $T\in\TT$ 
and $\mu_E:=\min\big\{\beta_E^{-1/2},h_E\alpha_E^{-1/2}\big\}$ for all $E\in\EE$.
As a notational convention, we take the second argument if $\beta_T=0$ or $\beta_E=0$.

Then, the robust refinement indicator reads for all $T\in\TT$ 
\begin{align}
 \label{eq:refindicatorrobust}
\begin{split}
  \eta_T^2&:=\mu_T^2\norm{R}{L^2(T)}^2+
  \frac{1}{2}\sum_{E\in\EEi\cap\EEt}\alpha_E^{-1/2}\mu_E\norm{J}{L^2(E)}^2
 +\sum_{E\in\EEr\cap\EEt}\alpha_E^{-1/2}\mu_E\norm{J}{L^2(E)}^2\\
  &\quad+\sum_{E\in\EEr\cap\EEt}h_E\norm{
  \nabla_\Gamma\big((1/2-\KK)(u_0-u_h)-\VV\phi_h\big)}{L^2(E)}^2
\end{split}
\end{align}
with $R$ and $J$ from~\cref{eq:residuum} and~\cref{eq:edgeJ},
respectively.
If we apply the upwind discretization~\cref{eq:fvmbilinearup} we additionally need for all $T\in\TT$
\begin{align}
  \label{eq:refindicatorup}
  \eta_{T,{up}}^2&:=\alpha_T^{-1/2}\mu_T
   \sum_{\tau_{ij}^T\in\DD^T}\norm{\b\cdot\normal_i(u_h-u_{h,ij})}{L^2(\tau_{ij}^T)}^2
\end{align}
with $\DD^T:=\set{\tau^T_{ij}}{\tau^T_{ij}=V_i\cap V_j\cap T 
\text{ for }V_i,V_j\in\TT^* \text{ with }V_i\not=V_j, V_i\cap T\not=\emptyset, V_j\cap T\not=\emptyset}$
and $u_{h,ij}$ from \cref{eq:upwind}.
To prove robustness of our a~posteriori estimator we use the following
$L^2$-orthogonality property, which will help us to overcome the lack of Galerkin orthogonality
of the FVM part, and some robust estimates of the residual and jump terms from~\cite{Erath:2013-1};

\begin{lemma}[\cite{Erath:2013-1}]\label{lem:orthorobust}
 Let $\IIh$ be the robust nodal interpolant~\cref{eq:intopaffine} 
 for quasi-monotone diffusion distribution
 in the sense of~\cite{Petzoldt:2002-1} and $\IIh^*$ the $\TT^*$-piecewise constant
 interpolation operator~\cref{eq:intoppiecewise}.
 With the notation above there holds
 \begin{itemize}
 \item for all $v^*\in\PP^0(\TT^*)$ 
 \begin{align}
   \label{eq:proportho}
 \begin{split}
   &\sum_{T\in\TT}\int_T R v^*\,dx
   +\sum_{E\in\EE}\int_{E}J v^*\,ds=0.
 \end{split}
 \end{align}
 \item for all $v\in H^1(\Omega)$, and $v_h=\IIh v\in\SS^1(\TT)$
 \begin{align}
   \label{eq:resfemin}
   \sum_{T\in\TT}\int_T R(v-v_h)\,dx & \lesssim
   \left(\sum_{T\in\TT}\min\left\{\beta_T^{-1},h_T^2 \alpha_T^{-1}\right\}
    \norm{R}{L^2(T)}^2\right)^{1/2}\enorm{v}{\Omega},\\
      \label{eq:edgeJfemin}
      \sum_{E\in\EE}\int_{E}J (v-v_h)\,ds & \lesssim
      \left(\sum_{E\in\EE}\alpha_E^{-1/2}\min\left\{\beta_E^{-1/2},h_E\alpha_E^{-1/2}\right\}
      \norm{J}{L^2(E)}^2\right)^{1/2}\enorm{v}{\Omega}. 
 \end{align}
 \item for all $v\in H^1(\Omega)$, $v_h=\IIh v\in\SS^1(\TT)$, and $v_h^*=\IIh^* v_h\in\PP^0(\TT^*)$ 
 \begin{align}
   \label{eq:resfvmin}
   \sum_{T\in\TT}\int_T R(v_h-v_h^*)\,dx & \lesssim
   \left(\sum_{T\in\TT}\min\left\{\beta_T^{-1},h_T^2 \alpha_T^{-1}\right\}
    \norm{R}{L^2(T)}^2\right)^{1/2}\enorm{v}{\Omega},\\
  \label{eq:edgeJfvmin}
   \sum_{E\in\EE}\int_{E}J (v_h-v_h^*)\,ds & \lesssim
  \left(\sum_{E\in\EE}
  \alpha_E^{-1/2}\min\left\{\beta_E^{-1/2},h_E\alpha_E^{-1/2}\right\}
  \norm{J}{L^2(E)}^2\right)^{1/2}\enorm{v}{\Omega}. 
  \end{align}
  \end{itemize}
\end{lemma}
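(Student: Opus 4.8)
This lemma bundles together several auxiliary orthogonality and interpolation estimates that we need later for reliability; I would prove the items essentially in the order listed, since the later ones reuse the earlier ones. I assume throughout the notation already fixed: $\IIh$ is the robust Petzoldt nodal interpolant~\cref{eq:intopaffine}, $\IIh^*$ the dual-mesh piecewise-constant interpolant~\cref{eq:intoppiecewise}, and $R$, $J$ the residual quantities~\cref{eq:residuum} and~\cref{eq:edgeJ}.

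\textbf{Step 1: the $L^2$-orthogonality~\cref{eq:proportho}.} The key is that any $v^*\in\PP^0(\TT^*)$ is constant on each control volume $V_i$, so $v^*=\sum_{a_i\in\NN}v_i^*\chi_i^*$. I would integrate $R$ over each $V_i$ and integrate by parts elementwise on the pieces $V_i\cap T$, producing interior control-volume-boundary integrals of $(-\A\nabla u_h+\b u_h)\cdot\normal$ plus the reaction contribution, and I would compare with the definition of $\AA_V$~\cref{eq:fvmbilinear} applied to the characteristic-type test functions. Summing $\int_T R\,v^*$ over $T$ and regrouping by control volume, the interior dual-edge fluxes cancel pairwise and what remains on interelement edges $E=T\cap T'$ is exactly the jump $J|_E$ integrated against the (constant) value of $v^*$; on boundary edges one similarly recovers the boundary definition of $J$ together with the $\phi_h,t_0$ terms coming from~\cref{eq1:fvm}. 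So $\sum_T\int_T R v^* + \sum_E\int_E J v^*$ telescopes to the FVM equation~\cref{eq1:fvm} tested with the associated $v_h$, evaluated at the residual, which is zero. (In the upwind case the same computation goes through with $\b u_h$ replaced by $u_{h,ij}$ on interior dual edges; this is where~\cref{eq:refindicatorup} will later enter, but for~\cref{eq:proportho} itself the identity is unchanged because $R$ and $J$ are defined from the \emph{continuous} flux.)

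\textbf{Steps 2--3: the four interpolation estimates~\cref{eq:resfemin}--\cref{eq:edgeJfvmin}.} These are the standard robust residual/jump bounds from~\cite{Petzoldt:2002-1,Erath:2013-1} and I would simply cite and assemble them. For~\cref{eq:resfemin} and~\cref{eq:edgeJfemin} one localizes $\int_T R(v-\IIh v)$ and $\int_E J(v-\IIh v)$ to patches, applies the robust approximation properties of $\IIh$ — namely $\norm{v-\IIh v}{L^2(T)}\lesssim \min\{\beta_T^{-1/2},h_T\alpha_T^{-1/2}\}\enorm{v}{\omega_T}$ and the analogous trace estimate on $E$, both valid under quasi-monotonicity of $\alpha$ — then Cauchy--Schwarz and finite overlap of patches. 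For~\cref{eq:resfvmin} and~\cref{eq:edgeJfvmin} one writes $v_h-\IIh^* v_h = \IIh v - \IIh^*\IIh v$ and uses that $\IIh^*$ reproduces nodal values, so on each $V_i\cap T$ the difference is controlled by $h_T\norm{\nabla v_h}{L^\infty}$-type bounds which, after scaling by $\alpha_T$, reduce to the same $\min\{\beta_T^{-1/2},h_T\alpha_T^{-1/2}\}$ weights times $\enorm{v}{\Omega}$ via an inverse estimate and the $H^1$-stability of $\IIh$. The only care needed is to route every estimate through the \emph{energy seminorm} rather than the $H^1$ norm, which is exactly what the $\beta_T,\alpha_T$ weights are designed for.

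\textbf{Main obstacle.} The genuinely new work is Step 1: making the cancellation in~\cref{eq:proportho} completely rigorous requires carefully matching the elementwise integration-by-parts of $R$ on the pieces $V_i\cap T$ with the precise fluxes appearing in $\AA_V$~\cref{eq:fvmbilinear} — in particular getting the boundary edges in $\EEr^{in}$ versus $\EEr^{out}$ right, where the convective term $\b\cdot\normal\,u_h$ appears in $\AA_V$ only on $\Gamma^{out}$, matching the two cases in the jump definition~\cref{eq:edgeJ}. Everything else is bookkeeping with known robust-interpolation technology. Since the statement already attributes the result to~\cite{Erath:2013-1}, in the write-up I would give the short derivation of~\cref{eq:proportho} and for~\cref{eq:resfemin}--\cref{eq:edgeJfvmin} refer to the corresponding estimates there, noting only that the absence of a Dirichlet boundary is handled by the modified $\Pi_a$ and $Q_a$ already introduced above.
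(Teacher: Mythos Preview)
The paper does not give its own proof of this lemma: it is stated with the attribution~\cite{Erath:2013-1} and used as a black box. Your plan is therefore fully consistent with the paper's treatment --- indeed you go further by sketching the argument for~\cref{eq:proportho} and then, as the paper does, deferring~\cref{eq:resfemin}--\cref{eq:edgeJfvmin} to~\cite{Erath:2013-1,Petzoldt:2002-1}.

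One small imprecision in your Step~1: the interior dual-edge fluxes do \emph{not} cancel pairwise. After integrating by parts on each $V_i\cap T$ and summing over $T$ with $a_i\in T$, the dual-edge contributions assemble precisely into the flux term $\int_{\partial V_i\setminus\Gamma}(-\A\nabla u_h+\b u_h)\cdot\normal\,ds$ of $\AA_V$ in~\cref{eq:fvmbilinear}; it is the primal-edge contributions from adjacent elements that combine to give the jump $J$. The dual-edge terms are then eliminated not by cancellation but by invoking the discrete scheme~\cref{eq1:fvm}, using that $\IIh^*:\SS^1(\TT)\to\PP^0(\TT^*)$ is a bijection so that every $v^*\in\PP^0(\TT^*)$ arises as $\IIh^*v_h$ for some $v_h\in\SS^1(\TT)$. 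With this correction your outline is sound, and your remark about the $\Gamma^{in}/\Gamma^{out}$ split matching the two boundary cases in~\cref{eq:edgeJ} is exactly the point that needs care.
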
 
 
The next lemma describes the localization of the Sobolev norm on the boundary.
It is well-known in the context of a~posteriori
estimates for boundary element methods; 
e.g.,~\cite[Theorem~1]{Carstensen:1997-1} for the two dimensional 
case and~\cite[Theorem~3.2 and Corollary 4.2]{Carstensen:2001-1} 
for the three dimensional case.
In the following, $\nabla_\Gamma$ denotes the arc length derivative
in the 2-D case or the gradient over the surface in the 3-D case.
\begin{lemma}\label{lem:localization}
Assume $v\in H^{1}(\Gamma)$ is $L^2(\Gamma)$-orthogonal to $\PP^0(\EEr)$. Then, there holds
\begin{align}
\label{eq:p12estimate}
  \norm{v}{H^{1/2}(\Gamma)}\leq C(\EEr) \left(\sum_{E\in\EEr}
  h_E\norm{\nabla_\Gamma v}{L^2(E)}^2\right)^{1/2}.
\end{align}
\end{lemma}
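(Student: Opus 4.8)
The plan is to derive the $H^{1/2}(\Gamma)$ bound from the known localization results for boundary integral operators, treating the two- and three-dimensional cases together as far as possible. First I would recall the interpolation-type characterization of the fractional-order Sobolev norm: for $v\in H^1(\Gamma)$ one has $\norm{v}{H^{1/2}(\Gamma)}^2 \lesssim \norm{v}{L^2(\Gamma)}\,\norm{v}{H^1(\Gamma)}$ (or, equivalently, work with the Sobolev--Slobodeckij seminorm $|v|_{H^{1/2}(\Gamma)}$ and the full norm via $\norm{v}{H^{1/2}(\Gamma)}^2 = \norm{v}{L^2(\Gamma)}^2 + |v|_{H^{1/2}(\Gamma)}^2$). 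The role of the orthogonality hypothesis is exactly to control the $L^2$ part: since $v$ is $L^2(\Gamma)$-orthogonal to $\PP^0(\EEr)$, on each edge/face $E$ we may subtract the elementwise mean $\bar v_E$ (which lies in $\PP^0(\EEr)$) at no cost, and a Poincaré--Wirtinger inequality on $E$ gives $\norm{v}{L^2(E)} = \norm{v-\bar v_E}{L^2(E)} \lesssim h_E \norm{\nabla_\Gamma v}{L^2(E)}$. Summing over $E\in\EEr$ yields $\norm{v}{L^2(\Gamma)}^2 \lesssim \sum_{E\in\EEr} h_E^2 \norm{\nabla_\Gamma v}{L^2(E)}^2 \le \max_E h_E \sum_{E\in\EEr} h_E \norm{\nabla_\Gamma v}{L^2(E)}^2$, which is even stronger than what is needed.

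Next I would invoke the cited localization estimates directly. The results of Carstensen~\cite{Carstensen:1997-1} in 2-D and Carstensen--Maischak--Stephan~\cite{Carstensen:2001-1} in 3-D are precisely statements that, for piecewise polynomial (here piecewise affine on each $E$, via the arc-length/surface gradient) functions, the $H^{1/2}(\Gamma)$-seminorm is bounded by the weighted sum of local gradient contributions: $|v|_{H^{1/2}(\Gamma)}^2 \lesssim \sum_{E\in\EEr} h_E \norm{\nabla_\Gamma v}{L^2(E)}^2$. I would cite these as black boxes (as the lemma statement already signals), and simply assemble: combine this seminorm bound with the $L^2$ bound from the previous paragraph, absorb both into $\norm{v}{H^{1/2}(\Gamma)}^2 = \norm{v}{L^2(\Gamma)}^2 + |v|_{H^{1/2}(\Gamma)}^2$, and take square roots to obtain~\cref{eq:p12estimate} with a constant $C(\EEr)$ depending only on the shape-regularity (quasi-uniformity) of the boundary mesh and on $\diam(\Omega)$. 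A small bookkeeping point: the factor $h_E^2$ from the Poincaré step must be reconciled with the single power $h_E$ in~\cref{eq:p12estimate}; this is handled by the crude estimate $h_E \le \diam(\Gamma) \lesssim 1$ (recall the scaling assumption $\diam(\Omega)<1$ in 2-D), absorbing the extra factor into $C(\EEr)$.

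The main obstacle is not conceptual but one of quoting the right form of the cited localization lemmas: the estimates in~\cite{Carstensen:1997-1,Carstensen:2001-1} are typically stated either for a specific boundary element context or with slightly different mesh-dependent weights, and one must check that the version applied here — a weighted $h_E$-sum of $\norm{\nabla_\Gamma v}{L^2(E)}^2$ dominating the full $H^{1/2}(\Gamma)$-norm under the single orthogonality constraint against $\PP^0(\EEr)$ — is the one actually available, or follows from the available one by the elementary $L^2$-reduction argument above. Since the paper explicitly attributes the statement to those references, I would present the proof as: (i) reduce the $L^2(\Gamma)$-part using the orthogonality and edgewise Poincaré inequalities; (ii) invoke~\cite[Theorem~1]{Carstensen:1997-1} (2-D) and~\cite[Theorem~3.2, Corollary~4.2]{Carstensen:2001-1} (3-D) for the seminorm part; (iii) add and take the square root. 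No genuinely hard estimate is required beyond what is already in the literature.
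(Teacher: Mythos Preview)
The paper does not give a proof of this lemma at all: it simply states the result and attributes it to \cite[Theorem~1]{Carstensen:1997-1} (2-D) and \cite[Theorem~3.2, Corollary~4.2]{Carstensen:2001-1} (3-D). Those references directly prove the bound on the \emph{full} $H^{1/2}(\Gamma)$-norm for any $v\in H^1(\Gamma)$ that is $L^2(\Gamma)$-orthogonal to $\PP^0(\EEr)$; no further argument is supplied or needed.

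Your proposal is therefore correct but over-engineered. Your step~(ii) already cites the same theorems, and those theorems deliver the full norm estimate~\cref{eq:p12estimate} outright --- so your step~(i), the separate Poincar\'e--Wirtinger treatment of the $L^2(\Gamma)$-part, is redundant (though correct). One small slip: you describe the cited results as holding ``for piecewise polynomial (here piecewise affine on each $E$)'' functions. That is not the setting here --- $v$ is a general $H^1(\Gamma)$ function (in the application it is $(1/2-\KK)(u_0-u_h)-\VV\phi_h$, which is certainly not piecewise polynomial), and the Carstensen results are stated and proved for such general functions under the orthogonality hypothesis, not only for discrete ones. Once you drop that misreading and recognize that the cited theorems already give the full $H^{1/2}$ bound, your proof collapses to exactly what the paper does: a direct citation.
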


\begin{remark}\rm
The constant $C(\EEr)$ depends on the (boundary-) mesh $\EEr$, but we can 
ensure its boundedness by shape regularity of $\TT$ in two dimensions and 
by only using newest vertex bisection refinement in the 3-D case.
We refer to~{\rm\cite{Carstensen:2001-1}} for a detailed discussion
about the dependency.
\end{remark}


Standard techniques for residual-based error estimates together with
\cref{lem:robustellipt,lem:orthorobust,lem:localization} allow us to show:

\begin{theorem}[Robust reliability]\label{th:reliabilityfvem}
Let us assume $0<\varepsilon<1$ and $(1-\varepsilon)\alpha_{\min} - \frac{1}{4} C_{\KK} > 0$, where
$\alpha_{min}=\min_{T\in\TT} \alpha_T$.
Then, there is a constant $C_{\rm rel}>0$ which depends only on the shape of the
elements in $\TT$ but not on the size, the number of elements nor the variation
of the model data such that
\begin{align}
 \label{eq:robreliability}
  \enorm{u-u_h}{\Omega}+
  \norm{\phi-\phi_h}{\VV}
  \leq C_{\rm rel} \frac{1}{\min\left\{ \varepsilon, C_{\rm harm}\right\}} 
  \left( \sum_{T\in \TT}\eta_T^2 \right)^{1/2}
\end{align}
with 
 \begin{align*}  
 C_{\rm harm}=\frac{1}{2}  \left[  
          (1-\varepsilon)\alpha_{\min} +1  
          - \sqrt{ ((1-\varepsilon)\alpha_{\min}-1)^2+C_{\KK}}
        \right] 
\end{align*}
If we replace $\AA_V$ by $\AA^{up}_V$~\cref{eq:fvmbilinearup} in~\cref{eq:fvm}
we get the robust upper bound
\begin{align}
 \label{eq:robreliabilityup}
  \enorm{u-u_h}{\Omega}+
  \norm{\phi-\phi_h}{\VV}
  \leq C_{\rm rel} \frac{1}{\min\left\{ \varepsilon, C_{\rm harm}\right\}} 
  \left( \sum_{T\in \TT}\big(\eta_T^2 +\eta^2_{T,{up}}\big) \right)^{1/2}.
\end{align}
\end{theorem}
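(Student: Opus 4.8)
The plan is to start from the robust ellipticity estimate of \cref{lem:robustellipt} applied to the error pair $\mathbf e:=(e,\epsilon_\phi):=(u-u_h,\phi-\phi_h)\in\HH$, which gives
\[
 \min\{\varepsilon,C_{\rm harm}\}\bigl(\enorm{e}{\Omega}^2+\norm{\epsilon_\phi}{\VV}^2\bigr)\le \BB(\mathbf e;\mathbf e).
\]
Since $\mathbf u=(u,\phi)$ solves the exact coupling \cref{eq:FEMBEM}, we have $\BB(\mathbf e;\mathbf e)=F(\mathbf e)-\BB(\u_h;\mathbf e)$, and the task reduces to bounding this residual functional by $C_{\rm rel}(\sum_T\eta_T^2)^{1/2}\bigl(\enorm{e}{\Omega}+\norm{\epsilon_\phi}{\VV}\bigr)$; dividing through then yields \cref{eq:robreliability}. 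The residual splits naturally into the volume/FVM part coming from \cref{eq1:weakfem} versus \cref{eq1:fvm}, and the boundary/BEM part coming from \cref{eq2:weakfem} versus \cref{eq2:fvm}.

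\textbf{The BEM part.} The second components of \cref{eq:FEMBEM} and of the discrete system are Galerkin-orthogonal: testing with $\psi_h\in\PP^0(\EEr)$ gives $\spe{\psi_h}{(1/2-\KK)e + \VV\epsilon_\phi}{\Gamma}=0$. Writing $w:=(1/2-\KK)(u_0-u_h)-\VV\phi_h$, one checks $(1/2-\KK)e+\VV\epsilon_\phi = w$ up to the exact-solution identity $\VV\phi+( \KK-1/2)u+(1/2-\KK)u_0=0$ from \cref{eq2:weakfem}, so in fact $w$ is $L^2(\Gamma)$-orthogonal to $\PP^0(\EEr)$ and $\norm{\epsilon_\phi}{\VV}$-type terms are controlled through $\norm{w}{H^{1/2}(\Gamma)}$. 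Applying the localization \cref{lem:localization} bounds $\norm{w}{H^{1/2}(\Gamma)}$ by $\bigl(\sum_{E\in\EEr}h_E\norm{\nabla_\Gamma w}{L^2(E)}^2\bigr)^{1/2}$, which is exactly the last sum in $\eta_T^2$. Mapping-property boundedness of $\VV^{-1}(1/2-\KK)$ and of $\VV$ between the relevant Sobolev spaces, together with the norm equivalence $\norm{\cdot}{\VV}\simeq\norm{\cdot}{H^{-1/2}(\Gamma)}$, closes this contribution.

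\textbf{The volume/FVM part.} This is the crux and the expected main obstacle, because there is no Galerkin orthogonality for the FVM discretization. Following the idea flagged in the introduction, I would insert $v_h:=\IIh e\in\SS^1(\TT)$ (the robust nodal interpolant \cref{eq:intopaffine}) and its $\TT^*$-piecewise constant projection $v_h^*:=\IIh^* v_h$, and exploit the $L^2$-orthogonality \cref{eq:proportho}: the exact variational equation \cref{eq1:weakfem} tested against $e$, the discrete equation \cref{eq1:fvm} tested against $v_h$, and integration by parts to introduce $R$ and $J$ reduce the volume residual to
\[
 \sum_{T\in\TT}\int_T R\,(e - v_h^*)\,dx + \sum_{E\in\EE}\int_E J\,(e-v_h^*)\,ds
\]
plus a boundary term that recombines with the BEM estimate above (here one uses $\spe{\phi_h}{\IIh^* v_h}{\Gamma}$ versus $\spe{\phi}{e}{\Gamma}$ and the definition of $J$ on $\EEr^{in}$, $\EEr^{out}$). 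Splitting $e-v_h^* = (e-v_h)+(v_h-v_h^*)$ and applying the four robust estimates \cref{eq:resfemin,eq:edgeJfemin,eq:resfvmin,eq:edgeJfvmin} from \cref{lem:orthorobust} bounds each piece by the corresponding robust weights $\mu_T^2\norm{R}{L^2(T)}^2$, $\alpha_E^{-1/2}\mu_E\norm{J}{L^2(E)}^2$ times $\enorm{e}{\Omega}$, which are precisely the first three terms of $\eta_T^2$. The only genuinely delicate bookkeeping is matching the weights $\min\{\beta_T^{-1},h_T^2\alpha_T^{-1}\}=\mu_T^2$ and ensuring the $\alpha_E^{-1/2}$ factors on interior and boundary edges line up with \cref{eq:refindicatorrobust}, and verifying that the quasi-monotonicity hypothesis on $\alpha$ (needed for $\IIh$) is in force under the theorem's assumption $(1-\varepsilon)\alpha_{\min}-\tfrac14 C_\KK>0$.

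\textbf{The upwind variant.} For \cref{eq:robreliabilityup} one replaces $\AA_V$ by $\AA_V^{up}$ and repeats the argument; the only new term is the difference $\AA_V^{up}(u_h,v_h)-\AA_V(u_h,v_h)=\sum_i v_h(a_i)\sum_{j\in\NN_i}\int_{\tau_{ij}}\b\cdot\normal\,(u_{h,ij}-u_h)\,ds$, which by the $L^2$-orthogonality trick and a Cauchy--Schwarz estimate on each $\tau_{ij}^T\in\DD^T$ is dominated by $\sum_T\eta_{T,up}^2$ with exactly the weight $\alpha_T^{-1/2}\mu_T$ appearing in \cref{eq:refindicatorup}; see the analogous computation in \cite{Erath:2013-1}. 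Collecting all contributions, using Cauchy--Schwarz over $T\in\TT$ to bundle the weighted residual and jump terms into $(\sum_T\eta_T^2)^{1/2}$ (resp. $(\sum_T(\eta_T^2+\eta_{T,up}^2))^{1/2}$), and dividing by $\min\{\varepsilon,C_{\rm harm}\}\bigl(\enorm{e}{\Omega}+\norm{\epsilon_\phi}{\VV}\bigr)$ completes the proof.
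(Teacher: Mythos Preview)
Your proposal is correct and follows essentially the same route as the paper's proof: robust ellipticity of $\BB$ on the error pair, integration by parts to produce $R$ and $J$, the $L^2$-orthogonality \cref{eq:proportho} with $v_h^*=\IIh^*\IIh e$, the split $(e-v_h)+(v_h-v_h^*)$ handled via the four estimates of \cref{lem:orthorobust}, \cref{lem:localization} for the BEM residual, and the reference to \cite{Erath:2013-1} for the upwind correction. One small correction: the quasi-monotonicity of $\alpha$ is a standing structural assumption of this subsection (see \cref{def:quasimonotone}) and is \emph{not} a consequence of the eigenvalue condition $(1-\varepsilon)\alpha_{\min}-\tfrac14 C_{\KK}>0$, so there is nothing to verify there.
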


\begin{remark}\label{rem:robreliability}
 The constant $1/\min\left\{ \varepsilon, C_{\rm harm}\right\}$ needs some discussion.
 First we note that if $\alpha_{\min}> C_{\KK}/(4(1-\varepsilon))$ then $C_{\rm harm}> 0$
 and
  if $\alpha_{\min}\to C_{\KK}/(4(1-\varepsilon))$ then $C_{\rm harm}\to 0$ (monotone).
  Thus, if we want to guarantee $1/\min\left\{ \varepsilon, C_{\rm harm}\right\}=1/\varepsilon$,
  we have the constraint
 \begin{align*}
  \alpha_{\min}>\frac{4\varepsilon(1-\varepsilon)+C_{\KK}}{4(1-\varepsilon)^2}.  
 \end{align*}
 Note that the contraction constant $C_{\KK}\in[1/2,1)$ depends on the shape of $\Omega$.
 For example, if we set $\varepsilon=1/10$ and pick $C_{\KK}=1$ (worst case) \cref{th:reliabilityfvem}
 holds for $\alpha_{\min}>0.4198$. Thus the reliability constant is in fact $10 C_{\rm rel}$,
 which is robust with respect to the jumping diffusion $\alpha$, $\b$ and $\r$.
\end{remark}

\begin{proof}
 Let us write $e:=u-u_h\in H^1(\Omega)$, $\delta:=\phi-\phi_h\in H^{-1/2}(\Gamma)$ for the errors.
 Some standard transformations,~\cref{eq:FEMBEM} and integration by parts lead to 
 \begin{align*}
 	\BB((e,\delta);(e,\delta))&=  
  \spl{f}{e}{\Omega} +\spe{t_0}{e}{\Gamma}+\spe{\delta}{(1/2-\KK)u_0}{\Gamma}\\
  &\qquad-\big(
   \AA(u_h,e)-\spe{\phi_h}{e}{\Gamma}
       +\spe{\delta}{(1/2-\KK)u_h}{\Gamma}+\spe{\delta}{\VV\phi_h}{\Gamma}\big)\\
   &=\sum_{T\in\TT}\int_T Re\,dx+\sum_{E\in\EE}\int_E Je\,dx\\
   &\qquad+\spe{\delta}{(1/2-\KK)u_0}{\Gamma}-\spe{\delta}{(1/2-\KK)u_h}{\Gamma}-\spe{\delta}{\VV\phi_h}{\Gamma}
 \end{align*}
 For the sums with $R$ and $J$ we use as in~\cite{Erath:2013-1} the $L^2$ orthogonality~\cref{eq:proportho}
 with $e_h^*=\IIh^*e_h$ where $e_h=\IIh e$ and add $e_h-e_h$.
  Then, Cauchy-Schwarz inequality, 
 the use of the robust estimates~\cref{eq:resfemin,eq:edgeJfvmin}
 and the localization~\eqref{eq:p12estimate}, see also \cref{eq2:fvm}, lead to
 \begin{align*}
  \BB((e,\delta);(e,\delta))&
  \lesssim
      \left[\left(\sum_{T\in\TT}\mu_T^2
       \norm{R}{L^2(T)}^2\right)^{1/2}+
       \left(\sum_{E\in\EE}\alpha_E^{-1/2}\mu_E
       \norm{J}{L^2(E)}^2\right)^{1/2}\right]\enorm{e}{\Omega}\\
  &\quad+\left(\sum_{E\in\EEr} h_E\norm{\nabla_\Gamma\left((1/2-\KK)(u_0-u_h)
       -\VV \phi_h\right)}{L^2(E)}^2\right)^{1/2}
       \norm{\delta}{H^{-1/2}(\Gamma)}
 \end{align*}
 Applying the Cauchy-Schwarz inequality again and the robust estimate~\cref{eq:robustelipt}
 proves the first assertion~\cref{eq:robreliability}.
 To prove~\cref{eq:robreliabilityup} we can use~\cite[Lemma 5.1, Lemma 5.2]{Erath:2013-1}
\end{proof}

\subsubsection{Non-robust reliable error estimator}

We can also give a non-robust error estimator, which can easily be defined for a diffusion matrix
$\A$ and is less restricting than the robust estimator.
\begin{align}
 \label{eq:refindicator}
\begin{split}
  \eta_T^2&:=h_T^2\norm{R}{L^2(T)}^2+
  \frac{1}{2}\sum_{E\in\EEi\cap\EEt}h_E\norm{J}{L^2(E)}^2
 +\sum_{E\in\EEr\cap\EEt}h_E\norm{J}{L^2(E)}^2\\
  &\quad+\sum_{E\in\EEr\cap\EEt}h_E\norm{\nabla_\Gamma 
  \big((1/2-\KK)(u_0-u_h)-\VV\phi_h\big)}{L^2(E)}^2
\end{split}
\end{align}
for all $T\in\TT$.

\begin{theorem}[Reliability]\label{th:reliabilitynonrobust}
Let us assume $\lambda_{\min} - \frac{1}{4} C_{\KK} > 0$.
Then, there is a constant $C_{\rm rel}>0$ which depends on the model data and on the shape of the
elements in $\TT$ but not on the size or the number of elements such that
\begin{align*}
  \norm{u-u_h}{H^1(\Omega)}+
  \norm{\phi-\phi_h}{H^{-1/2}(\Gamma)}
  \leq C_{\rm rel} \left( \sum_{T\in \TT}\eta_T^2 \right)^{1/2}.
\end{align*}
\end{theorem}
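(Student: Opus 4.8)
\textbf{Proof plan for \cref{th:reliabilitynonrobust}.}
The plan is to mimic the structure of the proof of \cref{th:reliabilityfvem}, but replacing every robust ingredient by its classical (non-robust) counterpart, and — crucially — to replace the robust ellipticity estimate of \cref{lem:robustellipt} by the ellipticity estimate of the stabilized bilinear form from \cite[Theorem~4]{Erath:2015-2}. The first step is to observe, as in \cite{Aurada:2013-1} for the pure diffusion case, that the stabilized bilinear form $\widetilde{\BB}$ evaluated at the pair of errors $(e,\delta)=(u-u_h,\phi-\phi_h)$ coincides with the ordinary bilinear form $\BB((e,\delta);(e,\delta))$; this is because the stabilization term is of the form $\spe{\psi}{\VV\phi - (1/2+\KK)\cdot}{\Gamma}$-type acting only on the BEM residual, and the second equation \cref{eq2:weakfem} forces that residual to vanish when tested against the exact solution, so the added term cancels when inserting the discrete error into both arguments. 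Hence $\|{\cdot}\|_{\HH}$-ellipticity of $\widetilde{\BB}$ with the \cite{Erath:2015-2} constant gives directly a lower bound $c\,(\|e\|_{H^1(\Omega)}^2+\|\delta\|_{H^{-1/2}(\Gamma)}^2)\leq \BB((e,\delta);(e,\delta))$, where $c$ may now depend on the model data.

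Next I would expand $\BB((e,\delta);(e,\delta))$ exactly as in the proof of \cref{th:reliabilityfvem}: using \cref{eq:FEMBEM} to subtract $F$, rewriting the FVM part via integration by parts to produce the volume residual $R$ and the jump terms $J$ from \cref{eq:residuum,eq:edgeJ}, and leaving the BEM part as $\spe{\delta}{(1/2-\KK)(u_0-u_h)-\VV\phi_h}{\Gamma}$. Then I apply the $L^2$-orthogonality \cref{eq:proportho} with $e_h^\ast=\IIh^\ast e_h$, where now $e_h=\mathcal{I}_h^{\mathrm{Cl}} e$ is the classical Cl\'ement nodal interpolant \cite{Clement:1975-1} rather than the robust Petzoldt interpolant — this is the point at which quasi-monotonicity of the diffusion is no longer required. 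Adding and subtracting $e_h$, the standard Cl\'ement approximation and trace estimates (in place of \cref{eq:resfemin,eq:edgeJfvmin}) give $\sum_T\int_T R(e-e_h)\lesssim (\sum_T h_T^2\|R\|_{L^2(T)}^2)^{1/2}\|e\|_{H^1(\Omega)}$ and the analogous bound for the jump sum with weights $h_E$; the BEM residual term is controlled by the localization \cref{lem:localization}, noting that $(1/2-\KK)(u_0-u_h)-\VV\phi_h$ is $L^2(\Gamma)$-orthogonal to $\PP^0(\EEr)$ by \cref{eq2:fvm}, which yields the weight $h_E$ on $\|\nabla_\Gamma(\cdot)\|_{L^2(E)}^2$ and the factor $\|\delta\|_{H^{-1/2}(\Gamma)}$.

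Combining these estimates with two applications of Cauchy–Schwarz in the finite sums bounds $\BB((e,\delta);(e,\delta))$ by $C\,(\sum_T\eta_T^2)^{1/2}\,(\|e\|_{H^1(\Omega)}^2+\|\delta\|_{H^{-1/2}(\Gamma)}^2)^{1/2}$ with $\eta_T$ as in \cref{eq:refindicator}; dividing by $(\|e\|_{H^1(\Omega)}^2+\|\delta\|_{H^{-1/2}(\Gamma)}^2)^{1/2}$ after using the ellipticity lower bound from the first step finishes the argument, with $C_{\rm rel}$ absorbing the data-dependent ellipticity constant and the Cl\'ement/trace/localization constants. The main obstacle — and the only genuinely new point relative to \cref{th:reliabilityfvem} — is the identification $\widetilde{\BB}((e,\delta);(e,\delta))=\BB((e,\delta);(e,\delta))$, i.e. verifying carefully that the stabilization term drops out for the \emph{exact} error; everything downstream is the now-standard residual-estimator machinery with classical (rather than robust) interpolation, so it need not be spelled out in detail.
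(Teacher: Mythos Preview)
Your proposal is correct and follows essentially the same route as the paper's own proof: replace the robust ellipticity of \cref{lem:robustellipt} by the ellipticity of the stabilized bilinear form from \cite[Theorem~4]{Erath:2015-2}, observe that the stabilization term vanishes on the error so that $\widetilde{\BB}((e,\delta);(e,\delta))=\BB((e,\delta);(e,\delta))$, expand $\BB$ into residual and jump contributions via integration by parts, insert the Cl\'ement interpolant $e_h=\IIh e$ together with the $L^2$-orthogonality \cref{eq:proportho} for $e_h^\ast=\IIh^\ast e_h$, and finish with standard Cl\'ement/trace estimates and the localization \cref{lem:localization}. One small correction: the vanishing of the stabilization term requires \emph{both} the continuous second equation \cref{eq2:weakfem} and its discrete counterpart \cref{eq2:fvm} (each tested with the constant function $1\in\PP^0(\EEr)\subset H^{-1/2}(\Gamma)$), not just \cref{eq2:weakfem} alone; subtracting the two is what kills the added term for the pair $(e,\delta)$.
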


\begin{proof}
We will only sketch the proof as it mostly follows the lines above. 
A similar proof in the case of FEM-BEM coupling
with $\b=(0,0)^T$ and $\r=0$ can be found in~\cite{Aurada:2013-1}. 
Let us write $e:=u-u_h\in H^1(\Omega)$, $\delta:=\phi-\phi_h\in H^{-1/2}(\Gamma)$ for the errors.
Instead of using the robust estimate~\cref{eq:robustelipt}
we use the ellipticity of the equivalent stabilized bilinear form of~\cite{Erath:2015-2}, i.e.,
there holds
\begin{align}
 \label{eq:ellipticity}
 \norm{e}{H^1(\Omega)}+
 \norm{\delta}{H^{-1/2}(\Gamma)}\lesssim \BB((e,\delta);(e,\delta))
 +\beta\big(\spe{1}{(1/2+\KK)e+\VV\delta}{\Gamma}\big)=\BB((e,\delta);(e,\delta)).
\end{align}
The last step follows directly from the second coupling equations~\cref{eq2:weakfem} and~\cref{eq2:fvm}.
Note that the stabilization term ($\beta=1$) is only needed if $(\div\b)/2+\r=0$ 
almost everywhere in $\Omega$ (otherwise $\beta=0$).
As in the proof of the robust error estimator we arrive at
\begin{align*}
 \norm{e}{H^1(\Omega)}+
 \norm{\delta}{H^{-1/2}(\Gamma)}\lesssim\BB((e,\delta);(e,\delta))&=  
  \sum_{T\in\TT}\int_T Re\,dx+\sum_{E\in\EE}\int_E Je\,dx+\spe{\delta}{(1/2-\KK)u_0}{\Gamma}\\
   &\qquad-\spe{\delta}{(1/2-\KK)u_h}{\Gamma}-\spe{\delta}{\VV\phi_h}{\Gamma}.
 \end{align*}
Again, we add $e_h-e_h$ with $e_h=\IIh e$,
 where $\IIh$ can be the standard Cl\'ement nodal interpolant~\cref{eq:intopaffine} 
  in the sense of~\cite{Clement:1975-1},
  and use the $L^2$ orthogonality~\cref{eq:proportho} with $e^*_h=\IIh^*e_h$.
The resulting terms can 
then be estimated by means of the Cauchy-Schwarz inequality, Cl\'ement type interpolation 
estimates~\cite{Clement:1975-1},
and estimates of the piecewise constant
nodal interpolation operator $\IIh^*$; see~\cite[Lemma 4.1]{Erath:2012-1}.
\end{proof}

\begin{remark}
 The ellipticity~\cref{eq:ellipticity} only holds for 
 $\alpha_{\min}> C_{\KK}/4$; see
 \cref{rem:robreliability} for the robust estimate. The ellipticity
 constant depends on the model data and the boundary $\Gamma$ and can become very small.
 However, 
 this seems to be a theoretical bound; see~\cite{Erath:2015-2}.
\end{remark}

\subsection{Efficiency}

Following~\cite{Verfurth:book-1996}, the analysis to prove efficiency for 
our residual based a~posteriori error estimator needs some more regularity
on the solution and the data. Thus we assume
$u|_\Gamma\in H^1(\Gamma)$, $\phi\in L^2(\Gamma)$, $u_0\in H^1(\Gamma)$
and $t_0\in L^2(\Gamma)$.
The idea is to use so called bubble functions and an edge lifting operator, which imply 
some inverse estimates for polynomial functions.
To get a lower bound in the energy (semi)norm~\cref{eq:energynorm} 
the inverse estimates are based on bubble functions on a squeezed element; see, 
e.g.,~\cite{Verfurth:1998-1,Erath:2010-phd}.

To get a lower bound for the terms with the boundary integral operators $\VV$
and $\KK$ of~\cref{eq:refindicatorrobust} we require that $\TT$ is a quasi-uniform 
mesh on the boundary
$\Gamma$. That means, the ratio of the longest edge in $\EEr$ to the shortest edge in $\EEr$
for a sequence of meshes is bounded by a constant, which does not depend on the 
size of the elements. Furthermore, there is only a global upper bound available. We stress 
that even for FEM-BEM residual estimators there is no better efficiency result available in the literature.
For more details we refer to~\cite[Section 6]{Erath:2010-phd} and~\cite{Erath:2013-1}.

\begin{lemma}
\label{lem:bem}
	  Let $\widetilde{\xi}_h\in\SS^1(\EEr)$ 
	  be the nodal interpolant of $u|_\Gamma$ and $\overline{\phi}\in\PP^0(\EEr)$ 
	  the $\EEr$-piecewise integral mean
	  of $\phi$, $h_{\Gamma,max}:=\max_{E\in\EEr}h_E$ and $h_{\Gamma,min}:=\min_{E\in\EEr}h_E$. 
	  Then, there holds the global estimate
	\begin{align*}
	&\sum_{E\in\EEr}h_E^{1/2}\norm{\nabla_\Gamma 
		  \big((1/2-\KK)(u_0-u_h)-\VV\phi_h\big)}{L^2(E)}\\
	  &\qquad\leq\sum_{E\in\EEr}h_E^{1/2}\Big(\norm{\nabla_\Gamma
	  \big((1/2+\KK)(u|_\Gamma-u_h)\big)}{L^2(\Gamma)} 
	    +\norm{\nabla_\Gamma\VV(\phi-\phi_h)}{L^2(\Gamma)}\Big) \\
	  &\qquad\lesssim
	  \frac{h_{\Gamma,max}^{1/2}}{h_{\Gamma,min}^{1/2}}\Big(
	  \norm{u-u_h}{H^{1}(\Omega)}+\norm{\phi-\phi_h}{H^{-1/2}(\Gamma)}\\
	  &\qquad\qquad\qquad\quad
	  +h_{\Gamma,max}^{1/2}\big(\norm{u|_\Gamma-\widetilde{\xi}_h}{H^{1}(\Gamma)}
	  +\norm{\phi-\overline{\phi}}{L^{2}(\Gamma)}\big)\Big).
	\end{align*}
\end{lemma}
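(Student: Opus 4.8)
The plan is to prove the stated chain of inequalities in two stages: first the elementary triangle-inequality splitting that gives the first line, and then the genuine work of bounding each of the two resulting surface-gradient terms by the natural energy quantities.

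For the first inequality I would simply use the second discrete coupling equation~\cref{eq2:fvm} together with its continuous counterpart~\cref{eq2:weakfem}. Since $\psi_h$ ranges over $\PP^0(\EEr)$, we have $\spe{\psi_h}{(1/2-\KK)(u_0-u_h)-\VV\phi_h}{\Gamma}=0$ for all $\psi_h\in\PP^0(\EEr)$; on the other hand the exact solution satisfies $(1/2-\KK)u_0 = (1/2-\KK)u + \VV\phi$ on $\Gamma$ (this is~\cref{eq2:weakfem} with the identification $u_e = u - u_0$), so
\begin{align*}
  (1/2-\KK)(u_0-u_h)-\VV\phi_h
  = (1/2+\KK)(u-u_h) + \VV(\phi-\phi_h) - (u-u_h)
\end{align*}
on $\Gamma$, and applying $\nabla_\Gamma$ kills the last term only if $u-u_h$ is constant — so instead I keep the identity as $(1/2-\KK)(u_0-u_h)-\VV\phi_h = (1/2+\KK)(u|_\Gamma-u_h)+\VV(\phi-\phi_h)$ after rewriting $1/2 - \KK$ acting on $u_0$ via the coupling, then take $\nabla_\Gamma$, apply the triangle inequality on each edge $E\in\EEr$, and sum with the weights $h_E^{1/2}$. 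That gives the second line exactly.

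For the second (the main) inequality I would estimate the two global terms $\sum_E h_E^{1/2}\norm{\nabla_\Gamma((1/2+\KK)(u|_\Gamma-u_h))}{L^2(\Gamma)}$ and $\sum_E h_E^{1/2}\norm{\nabla_\Gamma\VV(\phi-\phi_h)}{L^2(\Gamma)}$ separately. Note that in each summand the $L^2(\Gamma)$-norm is over all of $\Gamma$ (it does not depend on $E$), so $\sum_{E\in\EEr}h_E^{1/2} \le \#\EEr \cdot h_{\Gamma,max}^{1/2}$, and combined with $\#\EEr \lesssim |\Gamma|/h_{\Gamma,min}$ this produces exactly the ratio $h_{\Gamma,max}^{1/2}/h_{\Gamma,min}^{1/2}$ (here I am using quasi-uniformity of $\EEr$ to keep $\#\EEr\, h_{\Gamma,min} \lesssim 1$ up to the domain size). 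It then remains to bound $\norm{\nabla_\Gamma((1/2+\KK)w)}{L^2(\Gamma)}$ and $\norm{\nabla_\Gamma\VV\sigma}{L^2(\Gamma)}$ for $w = u|_\Gamma-u_h$ and $\sigma = \phi-\phi_h$. For the $\KK$ term I would insert the nodal interpolant $\widetilde\xi_h$ of $u|_\Gamma$, write $w = (u|_\Gamma-\widetilde\xi_h) + (\widetilde\xi_h-u_h)$, use the mapping property $\KK\colon H^1(\Gamma)\to H^1(\Gamma)$ on the first piece and an inverse estimate $\norm{\nabla_\Gamma v_h}{L^2(\Gamma)}\lesssim h_{\Gamma,min}^{-1/2}\norm{v_h}{H^{1/2}(\Gamma)}$ on the discrete piece $\widetilde\xi_h-u_h\in\SS^1(\EEr)$, then a trace inequality $\norm{\widetilde\xi_h-u_h}{H^{1/2}(\Gamma)}\lesssim \norm{u-u_h}{H^1(\Omega)} + \norm{u|_\Gamma-\widetilde\xi_h}{H^{1/2}(\Gamma)}$ and the approximation estimate $\norm{u|_\Gamma-\widetilde\xi_h}{H^{1/2}(\Gamma)}\lesssim h_{\Gamma,max}^{1/2}\norm{u|_\Gamma-\widetilde\xi_h}{H^{1}(\Gamma)}$. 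For the $\VV$ term I proceed analogously, inserting $\overline\phi$, using $\VV\colon L^2(\Gamma)\to H^1(\Gamma)$ on $\phi-\overline\phi$ (after $\nabla_\Gamma$, this is bounded) and $\VV\colon H^{-1/2}(\Gamma)\to H^{1/2}(\Gamma)$ plus an inverse estimate on the discrete remainder $\overline\phi-\phi_h\in\PP^0(\EEr)$, again picking up the factor $h_{\Gamma,min}^{-1/2}$. Collecting the terms yields the claimed bound, with the $h_{\Gamma,max}^{1/2}$ in front of $\norm{u|_\Gamma-\widetilde\xi_h}{H^1(\Gamma)}+\norm{\phi-\overline\phi}{L^2(\Gamma)}$ coming from the approximation estimates.

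**The main obstacle** is the interplay of inverse and trace estimates on the boundary mesh: the global inverse estimate costs a factor $h_{\Gamma,min}^{-1/2}$, which is why quasi-uniformity of $\EEr$ is essential and why only a \emph{global} (not local) efficiency bound can be obtained — exactly the point flagged before the lemma. One must also be careful that the local mapping properties of $\KK$ and $\VV$ are not available (these operators are nonlocal), so the gradient norms genuinely must be taken over all of $\Gamma$ and the summation over $E\in\EEr$ is what converts this into the stated $h_{\Gamma,max}/h_{\Gamma,min}$ ratio; this is the step where sharper local estimates would be desirable but are, as the authors note, not known even in the FEM-BEM setting. The remaining ingredients — trace theorem, standard nodal and $L^2$-projection approximation estimates in $H^{1/2}$ and $H^1$ on $\Gamma$, and the inverse estimate on $\SS^1(\EEr)$ and $\PP^0(\EEr)$ — are routine and I would cite~\cite{Erath:2010-phd,Erath:2013-1,Carstensen:2001-1} rather than reprove them.
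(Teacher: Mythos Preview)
Your approach matches the paper's. For the first inequality the paper uses precisely the identity $(1/2-\KK)u_0=(1/2-\KK)u|_\Gamma+\VV\phi$ (this is \cref{eq2:weakfem} read as an equality in $H^{1/2}(\Gamma)$) together with the triangle inequality; note that substitution actually yields $(1/2-\KK)(u|_\Gamma-u_h)+\VV(\phi-\phi_h)$, with a \emph{minus} in front of $\KK$ --- your intermediate manipulation $(1/2+\KK)w-w=-(1/2-\KK)w$ has a sign slip, though for the subsequent norm bound the sign on $\KK$ is immaterial since $1/2\pm\KK$ enjoy the same $H^1(\Gamma)\to H^1(\Gamma)$ mapping property. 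For the second inequality the paper simply records the trace estimate $\norm{u|_\Gamma-u_h}{H^{1/2}(\Gamma)}\lesssim\norm{u-u_h}{H^{1}(\Omega)}$ and cites~\cite{Erath:2010-phd} for the rest; your sketch --- insert $\widetilde\xi_h$ and $\overline\phi$, apply $\KK:H^1(\Gamma)\to H^1(\Gamma)$ and $\VV:L^2(\Gamma)\to H^1(\Gamma)$ on the continuous pieces, and inverse estimates on the discrete remainders --- is exactly the content of that reference.

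There is one genuine arithmetic gap in your bookkeeping. From $\sum_{E}h_E^{1/2}\le\#\EEr\,h_{\Gamma,max}^{1/2}$ and $\#\EEr\lesssim|\Gamma|/h_{\Gamma,min}$ you obtain $h_{\Gamma,max}^{1/2}/h_{\Gamma,min}$, \emph{not} the claimed $h_{\Gamma,max}^{1/2}/h_{\Gamma,min}^{1/2}$; multiplied by the further factor $h_{\Gamma,min}^{-1/2}$ coming from the inverse estimate on the discrete pieces, your route overshoots the target by a full power of $h_{\Gamma,min}^{-1}$. The way to get the correct exponent is to avoid the crude edge-by-edge enlargement $\norm{\cdot}{L^2(E)}\le\norm{\cdot}{L^2(\Gamma)}$ inside the sum and instead pass through the $\ell^2$-weighted quantity that actually appears in the estimator, namely
\[
\Big(\sum_{E\in\EEr} h_E\norm{\nabla_\Gamma g}{L^2(E)}^2\Big)^{1/2}\le h_{\Gamma,max}^{1/2}\,\norm{\nabla_\Gamma g}{L^2(\Gamma)},
\]
after which the single factor $h_{\Gamma,min}^{-1/2}$ from the inverse estimate on $\widetilde\xi_h-u_h\in\SS^1(\EEr)$ respectively $\overline\phi-\phi_h\in\PP^0(\EEr)$ yields exactly the ratio $h_{\Gamma,max}^{1/2}/h_{\Gamma,min}^{1/2}$.
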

\begin{proof}
	The first inequality uses the relation $(1/2-\KK)u_0=(1/2-\KK)u|_\Gamma+\VV\phi\in H^1(\Gamma)$
	and the triangle inequality. 
	With $\norm{u|_\Gamma-u_h}{H^{1/2}(\Gamma)}\lesssim\norm{u-u_h}{H^{1}(\Omega)}$
	the second estimate follows directly from~\cite{Erath:2010-phd}.
\end{proof}

Finally we are able to formulate an efficiency statement
for our a~posteriori error estimator, i.e., $\eta$ is a lower bound of the error.
\begin{theorem}[Efficiency]\label{th:efficiency}
If $\TT$ is a quasi-uniform mesh on the boundary $\Gamma$, we get an inverse inequality
to the reliability \cref{th:reliabilityfvem}; i.e., the a~posteriori
estimate is sharp up to higher order terms. The quantities generated 
through the interior problem approximation are even
locally efficient without any restriction on the boundary mesh. 
\end{theorem}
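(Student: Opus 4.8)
The plan is to prove, for each computable contribution of the refinement indicator $\eta_T$ from \cref{eq:refindicatorrobust} (and of $\eta_{T,up}$ from \cref{eq:refindicatorup}), a local lower bound by the error measured in the energy (semi)norm $\enorm{\cdot}{\Omega}$ and in $\norm{\cdot}{\VV}$ on a small patch around $T$, up to data oscillation. As usual for residual estimators, the tools are interior element bubble functions and edge/face bubble functions with the associated inverse estimates; see~\cite{Verfurth:book-1996}. Since the lower bound is sought in the \emph{weak} energy seminorm \cref{eq:energynorm} and not in the stronger $H^1(\Omega)$ norm, these bubble functions are taken on a squeezed element whose anisotropy is tuned to the local mesh-P\'eclet number, exactly as in~\cite{Verfurth:1998-1,Erath:2010-phd,Erath:2013-1}; this is the source of the P\'eclet-dependent constant announced in the introduction and is unavoidable for energy-norm efficiency. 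The bulk of the argument --- the element residual $\mu_T\norm{R}{L^2(T)}$, the interior normal jumps $\alpha_E^{-1/2}\mu_E\norm{J}{L^2(E)}$ for $E\in\EEi$, and the upwind contribution $\eta_{T,up}$ --- is obtained verbatim as in~\cite[Section~5]{Erath:2013-1} (see also~\cite[Lemma~5.1 and Lemma~5.2]{Erath:2013-1} for the upwinding term): multiply $R$, resp.\ $J$, by the squeezed bubble, integrate by parts, insert the strong form $\div(-\A\nabla u+\b u)+\r u=f$ (which holds $\TT$-elementwise), and, for the jumps, absorb the residual volume term by the already estimated $\norm{R}{L^2(T)}$. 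These bounds are purely local and do not use any property of the boundary mesh, which is the claim that the indicators stemming from the interior approximation are locally efficient without a restriction on $\EEr$.

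The one contribution that differs from~\cite{Erath:2013-1}, and which I would carry out explicitly, is the normal jump on the coupling boundary, i.e.\ $\alpha_E^{-1/2}\mu_E\norm{J}{L^2(E)}$ for $E\in\EEr^{in}\cup\EEr^{out}$. The key observation is that, with $\phi$ identified with the exterior Neumann datum $\partial u_e/\partial\normal$ as in \cref{eq1:weakfem}, the transmission conditions \cref{eq5:model} and \cref{eq6:model} state precisely that $(-\A\nabla u+\b u)\cdot\normal+\phi+t_0=0$ on $\Gamma^{in}$ and $-\A\nabla u\cdot\normal+\phi+t_0=0$ on $\Gamma^{out}$; hence the exact counterpart of $J$ vanishes and, because $\phi\in L^2(\Gamma)$ by the regularity assumption, $J|_E$ equals $\bigl(-\A\nabla(u_h-u)+\b(u_h-u)\bigr)\cdot\normal+(\phi_h-\phi)$ on $\EEr^{in}\cap\EEt$ and $-\A\nabla(u_h-u)\cdot\normal+(\phi_h-\phi)$ on $\EEr^{out}\cap\EEt$ (the convective flux disappears from the jump on the outflow part because of the $\spe{\b\cdot\normal\,u}{v}{\Gamma^{out}}$ term in $\AA$). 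I would then multiply $J|_E$ by a squeezed boundary-face bubble supported in the single element $T$ with $E\subset\partial T$, integrate by parts into $T$, use $\div\bigl(-\A\nabla(u_h-u)+\b(u_h-u)\bigr)=-\r(u_h-u)-R$, and apply the inverse estimates. This yields the local estimate $\alpha_E^{-1/2}\mu_E\norm{J}{L^2(E)}\lesssim\enorm{u-u_h}{T}+\alpha_E^{-1/2}\mu_E\norm{R}{L^2(T)}+\alpha_E^{-1/2}\mu_E\norm{\phi-\phi_h}{L^2(E)}$, again with a P\'eclet-dependent constant; the first two terms are already controlled, and the remaining $L^2$-defect of $\phi-\phi_h$ on $E$ is treated globally (together with the nonlocal term below) by an inverse estimate for $\PP^0(\EEr)$, the splitting of $\phi-\phi_h$ into $\phi-\overline{\phi}$ plus $\overline{\phi}-\phi_h$, the norm equivalence $\norm{\cdot}{\VV}\simeq\norm{\cdot}{H^{-1/2}(\Gamma)}$, and the quasi-uniformity of $\TT$ on $\Gamma$, at the price of the higher-order term $\norm{\phi-\overline{\phi}}{L^2(\Gamma)}$.

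The single genuinely nonlocal contribution, $h_E\norm{\nabla_\Gamma\bigl((1/2-\KK)(u_0-u_h)-\VV\phi_h\bigr)}{L^2(E)}$, cannot be localized because $\VV$ and $\KK$ are integral operators; here I would simply invoke \cref{lem:bem}, which --- after summation over $E\in\EEr$ and under the quasi-uniformity assumption --- bounds it by $\norm{u-u_h}{H^1(\Omega)}+\norm{\phi-\phi_h}{H^{-1/2}(\Gamma)}$ plus the higher-order terms $\norm{u|_\Gamma-\widetilde{\xi}_h}{H^1(\Gamma)}+\norm{\phi-\overline{\phi}}{L^2(\Gamma)}$. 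Summing the local bounds over $T\in\TT$ (the patches have finite overlap by shape regularity) and adding this one global estimate then gives the inverse inequality to \cref{th:reliabilityfvem}, and with it the assertion. The main obstacle is the interplay of two incompatible scalings: the volume and jump residuals admit an energy-(semi)norm lower bound only through the anisotropic squeezed-bubble inverse estimates, which forces the local mesh-P\'eclet number into the constant, whereas the boundary-integral term is intrinsically nonlocal and forces the global, quasi-uniformity-dependent estimate of \cref{lem:bem}; keeping track of which norm ($\enorm{\cdot}{\Omega}$ versus $\norm{\cdot}{H^1(\Omega)}$) is actually available at each step, so that the interior and boundary halves can be recombined, is where the care is needed.
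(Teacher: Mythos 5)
Your proposal is correct and follows essentially the same route as the paper: the element residual, the jump terms, and the upwind contribution are handled by the (squeezed-bubble) local arguments of~\cite{Erath:2013-1} (the paper simply cites Claims~1--4 and Lemmas~5.1--5.2 there, where you spell out the boundary-jump case via the transmission conditions), while the nonlocal boundary-integral contribution is bounded globally by \cref{lem:bem} under quasi-uniformity of $\EEr$, which is exactly where the paper places the only global restriction. The only cosmetic slip is the weight in your boundary-jump bound, which should be $\alpha_E^{-1/4}\mu_E^{1/2}\norm{J}{L^2(E)}$ (the square root of the estimator weight), but this does not affect the argument.
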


\begin{proof}
 Lower estimates for the contributions with the residual $R$ and the jump terms
 $J$ of the refinement indicator $\eta_T$ in~\cref{eq:refindicatorrobust}
 can be found in~\cite[Claim~1 -- Claim~4]{Erath:2013-1}. 
 For terms with the boundary integral operators $\VV$
 and $\KK$ of~\cref{eq:refindicatorrobust} we apply \cref{lem:bem}.
 Then, the efficiency 
 \cref{th:efficiency} can be shown up to higher order terms.
 See also \cref{rem:robust}.
\end{proof}

\begin{remark}
\label{rem:robust}
For a detailed discussion of the higher order terms on the right-hand side of the lower bound
we refer to~\cite{Erath:2013-1,Erath:2010-phd}.
We note that the estimate depends on the local P\'eclet number,
which is a typical behaviour of such problems in the energy norm; 
see also the discussion about robustness in~\cite[Remark 6.1]{Erath:2013-1}
for the three field FVM-BEM coupling.
Obviously, \cref{th:efficiency} holds also for the inverse inequality of
the reliability
\cref{th:reliabilitynonrobust} with the non-robust refinement indicator~\cref{eq:refindicator}.
\end{remark}

 \section{Numerical results}
 \label{sec:numerics}

To verify the analytical findings and to show the strength of an adaptive refinement strategy,
we present three examples in two dimensions. 
The calculations were done in \textsc{Matlab} using some functions 
from the \textsc{Hilbert}-package~\cite{HILBERT:2013-1} for the matrices resulting 
from the integral operators $\VV$ and $\KK$. 
The arc-length derivative in the error estimator is estimated by a central difference quotient, thus by
\begin{align*}
\nabla_\Gamma v(x) \approx \frac{v(x_2)-v(x_1)}{|x_2-x_1|} 
\end{align*}
with $|x_2-x_1|=h_E/20$ and $x=(x_2+x_1)/2$.
If there is any convection involved we will use the full upwind scheme \cref{eq:upwind}
and replace the bilinear form $\AA_V$ in \cref{eq1:fvm} by $\AA_V^{up}$ defined in \cref{eq:fvmbilinearup}.
The error will be denoted by $E_h$, defined as
\begin{align*}
 E_h:=\enorm{u-u_h}{\Omega}+\norm{\phi-\phi_h}{\VV}
 \end{align*}
(recall that $\norm{\cdot}{H^{-1/2}}\sim\norm{\cdot}{\VV}$) and the error estimator $\eta$ 
is given by the sum of the indicators
from \cref{eq:refindicatorrobust,eq:refindicatorup} or \cref{eq:refindicator}
\begin{align*}
 \eta:=\sum_{T\in\TT} \left(\eta_T^2 (+\eta_{T,{up}}^2)\right)^{1/2},
\end{align*}
where the $\eta_{T,{up}}^2$ part is added when an upwind stabilization is used.

We will apply the refinement algorithm introduced in~\cite{Doerfler:1996-1}
with the following marking criterion:
let $\theta \in (0,1)$, then at the refinement step $k$ 
choose $\mathcal{M}^{(k)}\subset\TT^{(k)}$ with minimal cardinality such that
\begin{align*}
\sum_{T\in\mathcal{M}^{(k)}} \left(\eta_T^2 (+\eta_{T,{up}}^2)\right) \geq \theta \sum_{T\in\TT^{(k)}} \left(\eta_T^2 (+\eta_{T,{up}}^2)\right).
\end{align*} 
The elements in the subset $\mathcal{M}^{(k)}$ will then be refined by a red-green-blue refinement
strategy,
which leads to refined mesh $\TT^{(k+1)}$; 
see also~\cite{Verfurth:book-1996}). Therefore, the shape regularity constant
is bounded in all of our examples.
We choose $\theta=1/2$ for adaptive mesh refinement, $\theta=1$ means uniform refinement.
The regular initial triangulation $\TT^{(0)}$ will always have triangles with approximately the same size.

%

\begin{figure}
 \begin{center}
\begin{psfrags}%
\psfragscanon%
%
\psfrag{s01}[t][t]{\color[rgb]{0.15,0.15,0.15}\setlength{\tabcolsep}{0pt}\begin{tabular}{c}number of elements\end{tabular}}%
\psfrag{s02}[b][b]{\color[rgb]{0.15,0.15,0.15}\setlength{\tabcolsep}{0pt}\begin{tabular}{c}error and estimator\end{tabular}}%
\psfrag{s03}[l][l]{\color[rgb]{0,0,0}\setlength{\tabcolsep}{0pt}\begin{tabular}{l}\small $1/1$\end{tabular}}%
\psfrag{s04}[l][l]{\color[rgb]{0,0,0}\setlength{\tabcolsep}{0pt}\begin{tabular}{l}\small      $1/2$     \end{tabular}}%
\psfrag{s05}[l][l]{\color[rgb]{0,0,0}\setlength{\tabcolsep}{0pt}\begin{tabular}{l}\small$1/1$\end{tabular}}%
\psfrag{s06}[l][l]{\color[rgb]{0,0,0}\setlength{\tabcolsep}{0pt}\begin{tabular}{l}\small      $1/2$    \end{tabular}}%
\psfrag{s07}[l][l]{\color[rgb]{0,0,0}\setlength{\tabcolsep}{0pt}\begin{tabular}{l} \small     $1/3$     \end{tabular}}%
\psfrag{s08}[l][l]{\color[rgb]{0,0,0}\setlength{\tabcolsep}{0pt}\begin{tabular}{l}1/1\end{tabular}}%
%
\psfrag{l01}[l][l]{\color[rgb]{0,0,0}\setlength{\tabcolsep}{0pt}\begin{tabular}{c}$\eta$ (ada)\end{tabular}}%
\psfrag{l02}[l][l]{\color[rgb]{0,0,0}\setlength{\tabcolsep}{0pt}\begin{tabular}{c}$\eta$ (uni)\end{tabular}}%
\psfrag{l03}[l][l]{\color[rgb]{0,0,0}\setlength{\tabcolsep}{0pt}\begin{tabular}{l}$E_h$ (ada)\end{tabular}}%
\psfrag{l04}[l][l]{\color[rgb]{0,0,0}\setlength{\tabcolsep}{0pt}\begin{tabular}{l}$E_h$ (uni)\end{tabular}}%

\color[rgb]{0.15,0.15,0.15}%
%
\psfrag{x01}[t][t]{\small $10^{0}$}%
\psfrag{x02}[t][t]{\small $10^{2}$}%
\psfrag{x03}[t][t]{\small $10^{4}$}%
\psfrag{x04}[t][t]{\small $10^{6}$}%
\psfrag{x05}[t][t]{\small $10^{8}$}%
%
\psfrag{v01}[r][r]{\small $10^{-4}$}%
\psfrag{v02}[r][r]{\small $10^{-3}$}%
\psfrag{v03}[r][r]{\small $10^{-2}$}%
\psfrag{v04}[r][r]{\small $10^{-1}$}%
\psfrag{v05}[r][r]{\small$10^{0}$}%
\psfrag{v06}[r][r]{\small$10^{1}$}%
%
\resizebox{0.7\textwidth}{!}{\includegraphics{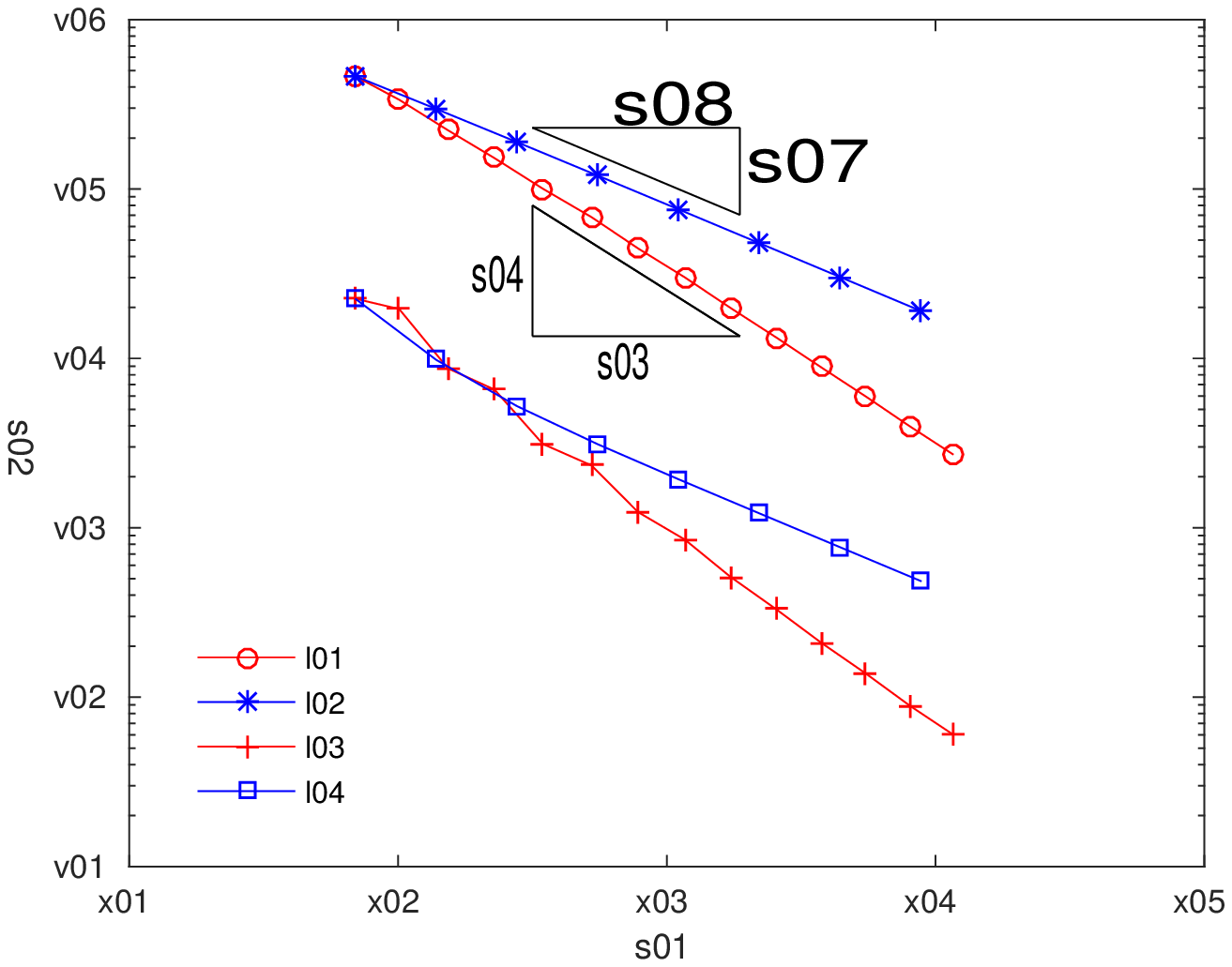}}%
\end{psfrags}%
\end{center}
\caption{\label{fig:sinError}
 The error estimator $\eta$ and the error $E_h$  
in the case of uniform (uni) and adaptive (ada) mesh refinement
for the example in~\cref{subsec:bsp1}. The recovery of the optimal convergence rate
in the adaptive case can be seen.}
\end{figure}

 \begin{figure}
 \centering
	\subfigure[$\#\TT^{(4)}=1248$.]{
	\includegraphics[width=0.35\textwidth]{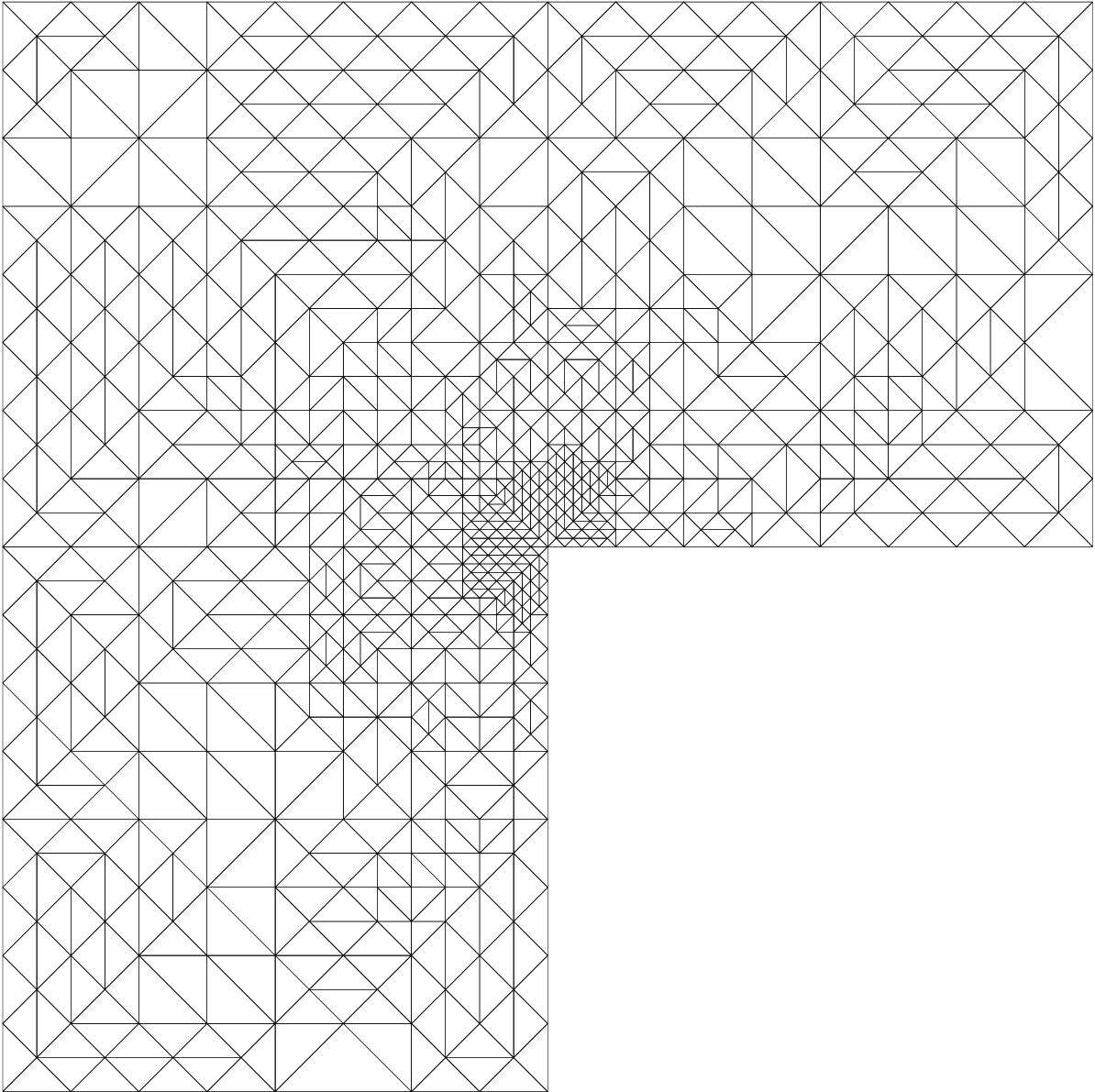}}
	\hspace{0.1\textwidth}
	\subfigure[$\#\TT^{(6)}=6314$.]{
	\includegraphics[width=0.35\textwidth]{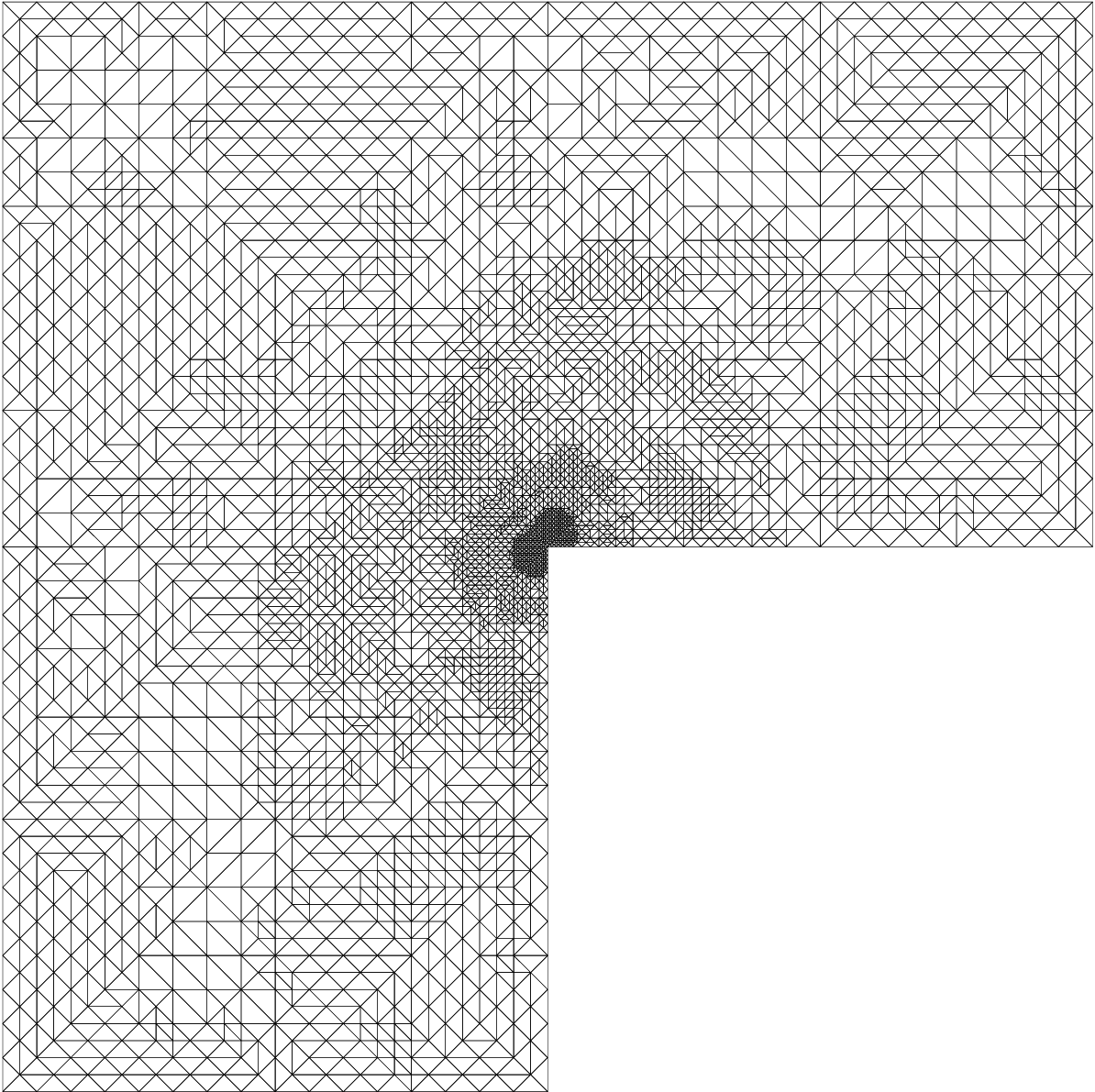}}
 \caption{\label{fig:sinMesh}
 Two adaptively generated meshes $\TT^{(4)}$ and $\TT^{(6)}$ of the fourth and sixth refinement step 
 of the example in~\cref{subsec:bsp1}. The refinement mainly takes place around the singularity.}
\end{figure}

\subsection{Diffusion problem on an L-shaped domain}
 \label{subsec:bsp1}

As a first test we consider a purely diffusive problem of model problem \cref{eq:model}, 
i.e., without any convection or reaction, $\b=(0,0)^T$ and $c=0$,
but a diffusion matrix. 
We want to have a specific solution to this problem. 
So we prescribe the coefficients and right-hand side appropriately.
The domain will be L-shaped, i.e., $\Omega= (-1/4, 1/4)^2 \setminus [0, 1/4]\times [-1/4,0]$. 
We prescribe a function that has a singularity in the corner $(0,0)$ 
of our domain (the gradient tends to infinity at this point).
The analytical solution in the interior domain $\Omega$ will then read (in polar 
coordinates $(x_1,x_2) = r(\cos\varphi, \sin\varphi)$ with $r\in\R_+$ and $\varphi\in [0,2\pi)$)
\begin{align*}
u(x_1,x_2) = r^{2/3} \sin(2\varphi/3),
\end{align*}
and in the exterior domain $\Omega_e$
\begin{align*}
u_e(x_1,x_2)=\log \sqrt{(x_1+0.125)^2+(x_2-0.125)^2}.
\end{align*}
Furthermore, we will fix the diffusion matrix to  
\begin{align*}
	\A=  \left ( \begin{array}{rr}
  10+\cos x_1 & 160\,x_1 x_2 \\
  160\,x_1 x_2 & 10+\sin x_2
  \end{array}\right).
\end{align*}
We compute $f$ and the jumps $t_0$ and $u_0$ according to the formulas.

Note that 
the function $u$ is not in $H^2(\Omega)$ and thus the optimal convergence 
rate of $\OO(h)$ for uniform mesh refinement~\cite{Erath:2015-2}) cannot be obtained.
The notation $\OO(h)$, where $h$ is the minimal
diameter of an element of the mesh, is a bit misleading in the adaptive case.
Therefore, we consider $\OO(N^{-p})$, where $N$ is the number of elements and $p\in\R^+$, 
which is equivalent to $\OO(h^{p})$ for uniform mesh refinement.
\Cref{fig:sinError} shows the error and error estimator for uniform and adaptive mesh
refinement. For uniform refinement we observe the reduced convergence order $\OO(N^{-1/3})$, whereas
with our adaptive strategy we can recover the optimal rate $\OO(N^{-1/2})$. This classical
benchmark result matches observations from the literature.
Note that in both cases the estimator is reliable and efficient.
\Cref{fig:sinMesh} shows two adaptively generated meshes, $\TT^{(4)}$ and $\TT^{(6)}$, 
generated from a start mesh $\TT^{(0)}$ with $48$ elements.
The refinement mainly takes place around the singularity.


 \begin{figure}
 \begin{center}
\subfigure[\label{subfig:tanherror}Error estimator $\eta$ and energy error $E_h$.]{
 \begin{psfrags}%
\psfragscanon%
%
\psfrag{s02}[l][l]{\color[rgb]{0,0,0}\setlength{\tabcolsep}{0pt}\begin{tabular}{l}      \small $1/2$     \end{tabular}}%
\psfrag{s03}[t][t]{\color[rgb]{0.15,0.15,0.15}\setlength{\tabcolsep}{0pt}\begin{tabular}{c}number of elements\end{tabular}}%
\psfrag{s04}[l][l]{\color[rgb]{0,0,0}\setlength{\tabcolsep}{0pt}\begin{tabular}{l}\small $1/1$\end{tabular}}%
\psfrag{s05}[b][b]{\color[rgb]{0.15,0.15,0.15}\setlength{\tabcolsep}{0pt}\begin{tabular}{c}error\end{tabular}}%
%
\psfrag{l01}[l][l]{\color[rgb]{0,0,0}\setlength{\tabcolsep}{0pt}\begin{tabular}{c}$\eta$ (ada)\end{tabular}}%
\psfrag{l02}[l][l]{\color[rgb]{0,0,0}\setlength{\tabcolsep}{0pt}\begin{tabular}{c}$\eta$ (uni)\end{tabular}}%
\psfrag{l03}[l][l]{\color[rgb]{0,0,0}\setlength{\tabcolsep}{0pt}\begin{tabular}{c}$E_h$ (ada)\end{tabular}}%
\psfrag{l04}[l][l]{\color[rgb]{0,0,0}\setlength{\tabcolsep}{0pt}\begin{tabular}{c}$E_h$ (uni)\end{tabular}}%

\color[rgb]{0.15,0.15,0.15}%
%
\psfrag{x01}[t][t]{\small $10^{0}$}%
\psfrag{x02}[t][t]{\small $10^{2}$}%
\psfrag{x03}[t][t]{\small $10^{4}$}%
\psfrag{x04}[t][t]{\small $10^{6}$}%
\psfrag{x05}[t][t]{\small $10^{8}$}%
%
\psfrag{v01}[r][r]{\small $10^{-2}$}%
\psfrag{v02}[r][r]{\small $10^{-1}$}%
\psfrag{v03}[r][r]{\small $10^{0}$}%
\psfrag{v04}[r][r]{\small $10^{1}$}%
\psfrag{v05}[r][r]{\small $10^{2}$}%
%
\resizebox{0.5\textwidth}{!}{\includegraphics{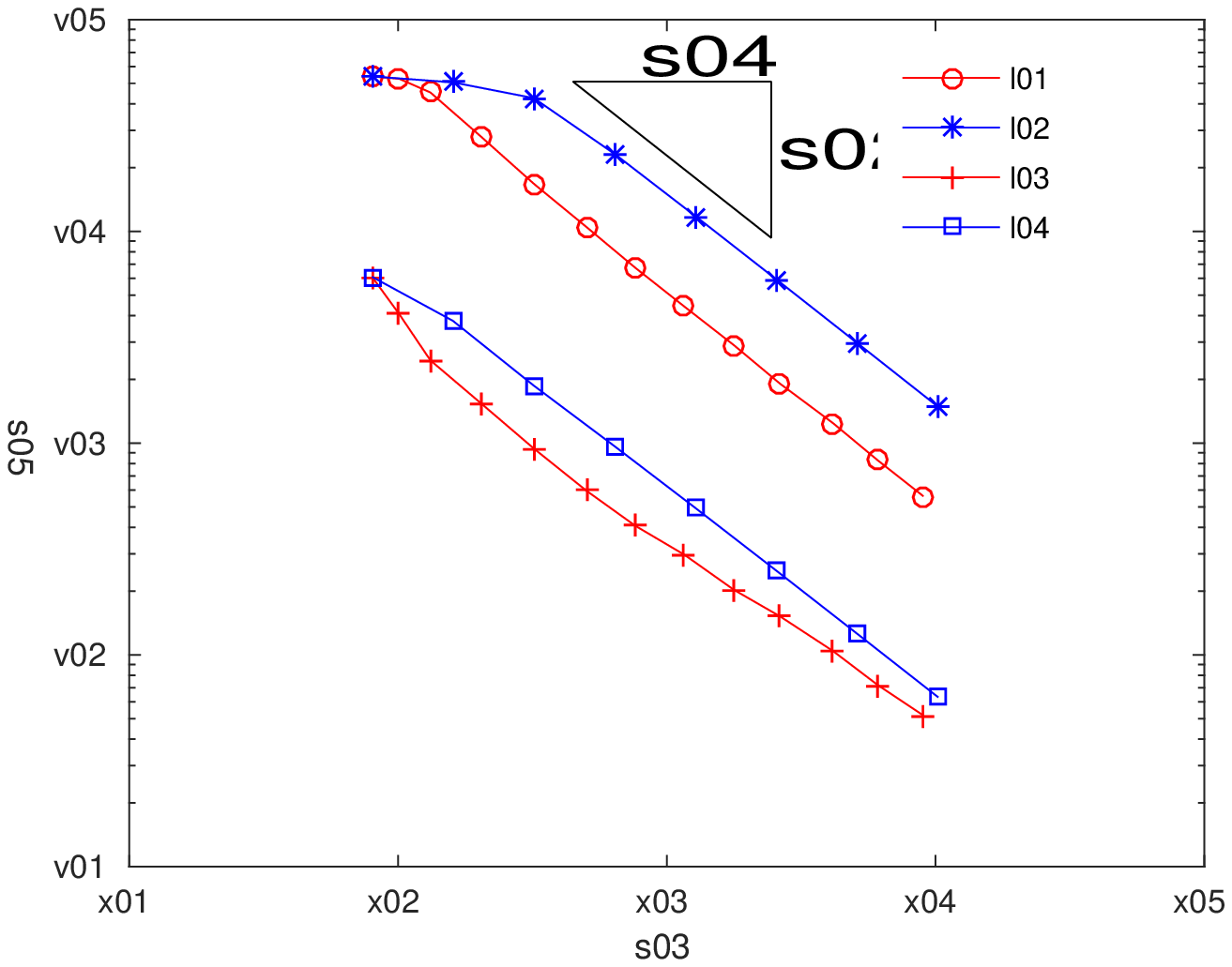}}%
\end{psfrags}}%
\subfigure[\label{subfig:efficiencyIndex}Efficiency index $\eta/E_h$.]{
\begin{psfrags}%
\psfragscanon%
%
\psfrag{s03}[t][t]{\color[rgb]{0.15,0.15,0.15}\setlength{\tabcolsep}{0pt}\begin{tabular}{c}number of elements\end{tabular}}%
\psfrag{s04}[b][b]{\color[rgb]{0.15,0.15,0.15}\setlength{\tabcolsep}{0pt}\begin{tabular}{c}efficiency index\end{tabular}}%
%
\psfrag{l01}[l][l]{\color[rgb]{0,0,0}\setlength{\tabcolsep}{0pt}\begin{tabular}{c}$\b=(10x_1,0)^T$\end{tabular}}%
\psfrag{l02}[l][l]{\color[rgb]{0,0,0}\setlength{\tabcolsep}{0pt}\begin{tabular}{c}$\b=(100x_1,0)^T$\end{tabular}}%
\psfrag{l03}[l][l]{\color[rgb]{0,0,0}\setlength{\tabcolsep}{0pt}\begin{tabular}{c}$\b=(1000x_1,0)^T$\end{tabular}}%
\psfrag{l04}[l][l]{\color[rgb]{0,0,0}\setlength{\tabcolsep}{0pt}\begin{tabular}{c}$\b=(10000x_1,0)^T$\end{tabular}}%
\color[rgb]{0.15,0.15,0.15}%
%
\psfrag{x01}[t][t]{\small $10^{1}$}%
\psfrag{x02}[t][t]{\small $10^{2}$}%
\psfrag{x03}[t][t]{\small $10^{3}$}%
\psfrag{x04}[t][t]{\small $10^{4}$}%
\psfrag{x05}[t][t]{\small $10^{5}$}%
\psfrag{x06}[t][t]{\small $10^{6}$}%
%
\psfrag{v01}[r][r]{\small $10^{0}$}%
\psfrag{v02}[r][r]{\small $10^{1}$}%
\psfrag{v03}[r][r]{\small $10^{2}$}%
%
\resizebox{0.5\textwidth}{!}{\includegraphics{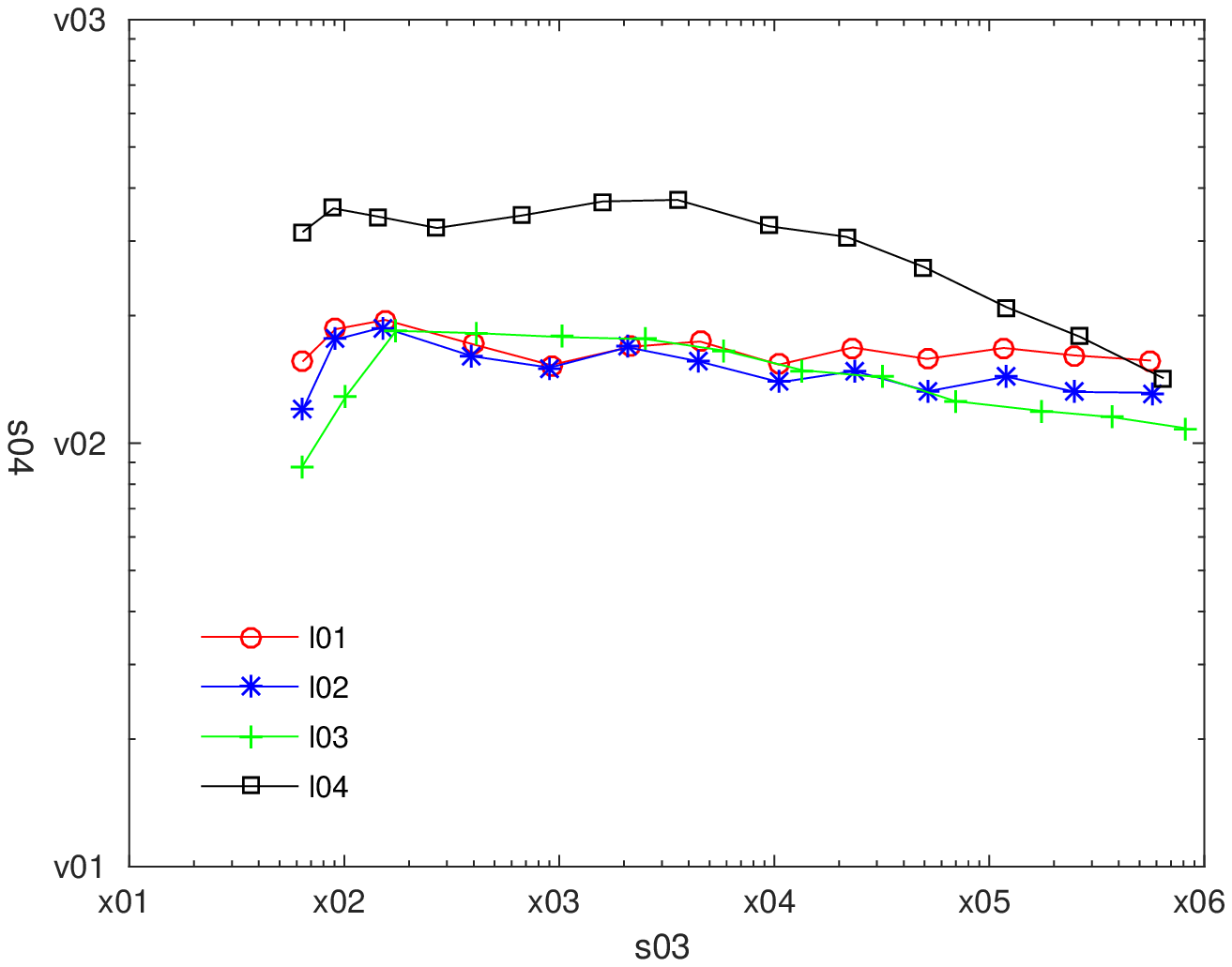}}%
\end{psfrags}}%
\end{center}
\caption{
In the left figure the error estimator $\eta$ and the error for the example in \cref{subsec:bsp2} are shown
in the case of uniform (uni) and adaptive (ada) mesh refinement. Because of the smoothness of the solution,
the convergence rate is optimal also with uniform refinement. In the right figure, 
the efficiency index for different values for $\b$ is shown.}
\end{figure}

 \begin{figure}
 \centering
	\subfigure[$\#\TT^{(4)}=1034$.]{
	\includegraphics[width=0.35\textwidth]{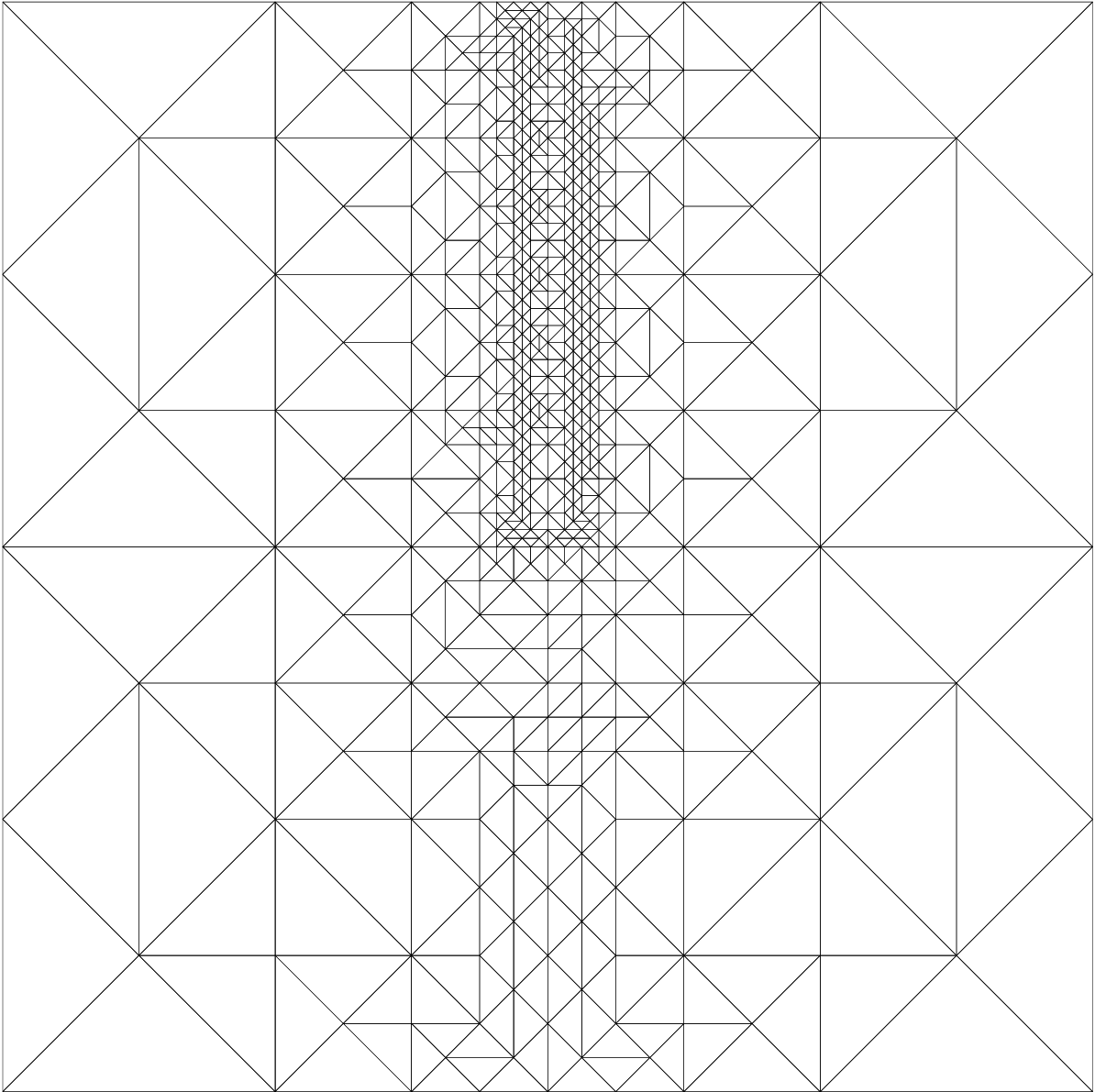}}
	\hspace{0.1\textwidth}
	\subfigure[$\#\TT^{(6)}=5846$.]{
	\includegraphics[width=0.35\textwidth]{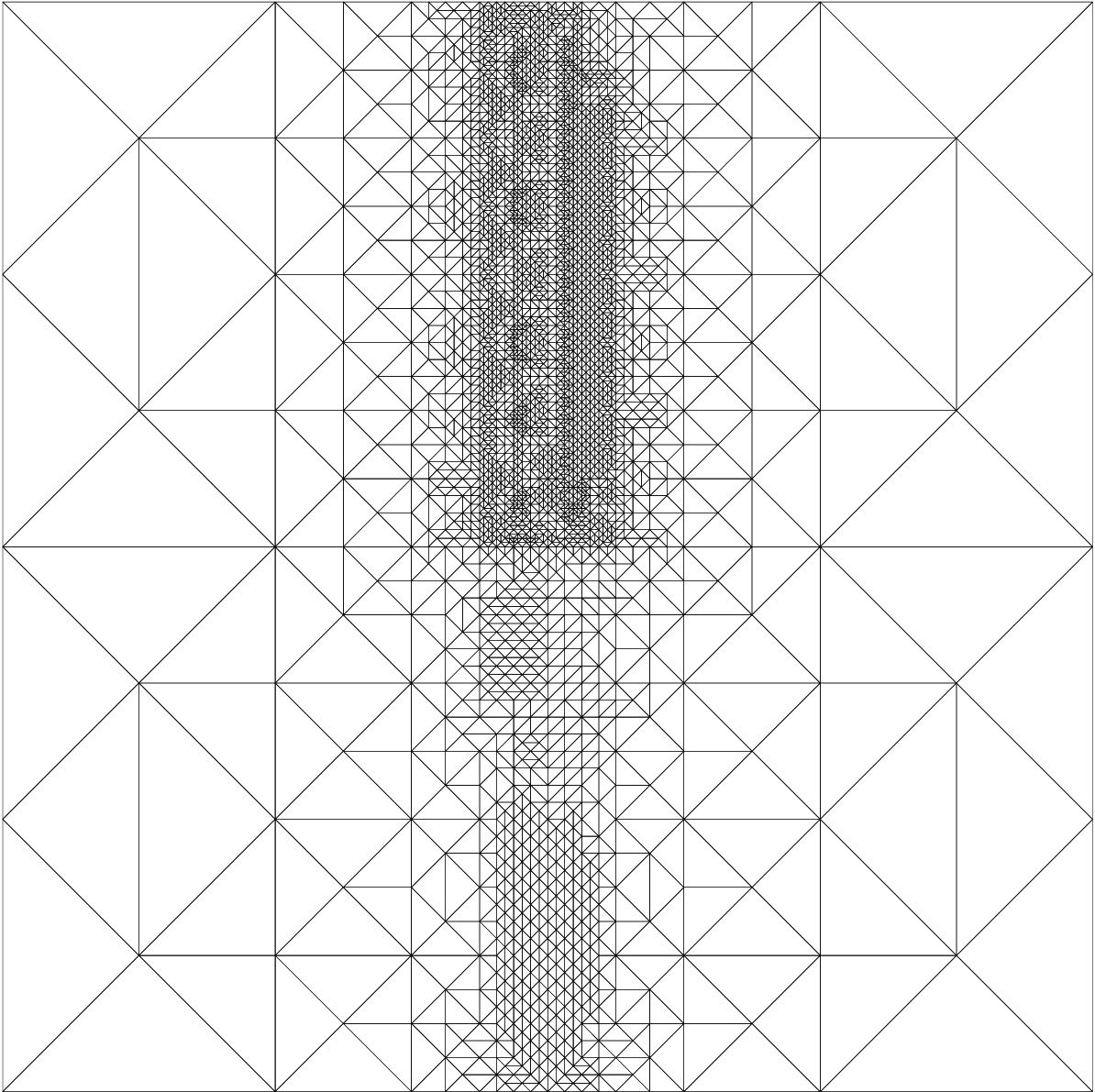}}
 \caption{\label{fig:tanhMesh}
 Two adaptively generated meshes $\TT^{(4)}$ and $\TT^{(6)}$ of the fourth and sixth refinement step
 of the example in~\cref{subsec:bsp2}. The refinement is mainly at the artificial shock of the 
 analytical solution.}
\end{figure}

 \subsection{Diffusion-convection problem}
 \label{subsec:bsp2}
 
For the next example we prescribe again a solution with known analytical 
properties for the model problem \cref{eq:model}. Now we choose $\Omega=(0,1/2)\times (0,1/2)$.
The solution in the interior domain $\Omega$ will be chosen to be
\begin{align*}
u(x_1,x_2)=0.5\left( 1-\tanh\left(\frac{0.25-x_1}{0.02}\right)\right),
\end{align*}
and the solution in the exterior domain $\Omega_e$ is similar as before, i.e., 
\begin{align*}
u_e(x_1,x_2)=\log \sqrt{(x_1-0.25)^2+(x_2-0.25)^2}.
\end{align*}
Thus, the interior solution has a simulated shock in the middle of the domain.
We choose the jumping diffusion coefficient as 
\begin{align*}
 \alpha=\begin{cases} 0.42\quad &\text{for}~x_2 < 0.25, \\
10 \quad &\text{for}~x_2 \geq 0.25, \end{cases}
\end{align*} 
the convection field $\b=(1000x_1,0)^T$ and the reaction coefficient $c=0$. 
So we have a convection dominated problem 
which will not yield a stable solution if we are not 
using any upwind stabilization \cref{eq:upwind}.
Because of that we will always use the full upwind scheme 
for this problem. The right-hand side $f$ and the jumps are calculated by means of 
the analytical solution.
Because of the smoothness of the interior and exterior solution 
we would expect an (optimal) convergence rate of $\OO(N^{-1/2})$ also for uniform mesh refinement. 
This can be seen in \cref{subfig:tanherror}.
For adaptive refinement we get $\OO(N^{-1/2})$ as well but the absolute value of the error
is actual smaller. Note that in both cases, uniform and adaptive mesh refinement, the estimator is
reliable and efficient.
The refinement (mainly) occurs where the function has its steepest gradient and is different for the
two values of the diffusion coefficient, see \cref{fig:tanhMesh} for 
the two meshes $\TT^{(4)}$ and $\TT^{(6)}$ generated from a start mesh with $64$ elements.
In \cref{subfig:efficiencyIndex} the efficiency index $\eta/E_h$, which measures
how many times we have overestimated the actual error,
is plotted for adaptive mesh refinement. We see indeed the robustness 
of our error estimator for $\b=\{(10x_1,0)^T;(100x_1,0)^T;(1000x_1,0)^T\}$. 
For $\b=(10000x_1,0)^T$, which is a very high convection dominated problem,
we observe the dependency of the local P\'eclet number, i.e., once we have resolved the shock region,
the efficiency constant convergences as well.

 \begin{figure}
 \begin{center}
 \begin{psfrags}%
\psfragscanon%
%
\psfrag{s02}[l][l]{\color[rgb]{0,0,0}\setlength{\tabcolsep}{0pt}\begin{tabular}{l}\small $2/5$     \end{tabular}}%
\psfrag{s03}[l][l]{\color[rgb]{0,0,0}\setlength{\tabcolsep}{0pt}\begin{tabular}{l}\small $1/1$\end{tabular}}%
\psfrag{s04}[l][l]{\color[rgb]{0,0,0}\setlength{\tabcolsep}{0pt}\begin{tabular}{l}\small $1/1$\end{tabular}}%
\psfrag{s06}[t][t]{\color[rgb]{0.15,0.15,0.15}\setlength{\tabcolsep}{0pt}\begin{tabular}{c}number of elements\end{tabular}}%
\psfrag{s07}[l][l]{\color[rgb]{0,0,0}\setlength{\tabcolsep}{0pt}\begin{tabular}{l}\small $1/2$     \end{tabular}}%
\psfrag{s08}[b][b]{estimator}%
%
\psfrag{l01}[l][l]{\color[rgb]{0,0,0}\setlength{\tabcolsep}{0pt}\begin{tabular}{c}$\eta$ (ada)\end{tabular}}%
\psfrag{l02}[l][l]{\color[rgb]{0,0,0}\setlength{\tabcolsep}{0pt}\begin{tabular}{c}$\eta$ (uni)\end{tabular}}%
\color[rgb]{0.15,0.15,0.15}%
%
\psfrag{x01}[t][t]{\small $10^{1}$}%
\psfrag{x02}[t][t]{\small $10^{2}$}%
\psfrag{x03}[t][t]{\small $10^{3}$}%
\psfrag{x04}[t][t]{\small $10^{4}$}%
\psfrag{x05}[t][t]{\small $10^{5}$}%
\psfrag{x06}[t][t]{\small $10^{6}$}%
%
\psfrag{v01}[r][r]{\small $10^{-2}$}%
\psfrag{v02}[r][r]{\small $10^{{-1}}$}%
\psfrag{v03}[r][r]{\small $10^{0}$}%
%
\resizebox{0.7\textwidth}{!}{\includegraphics{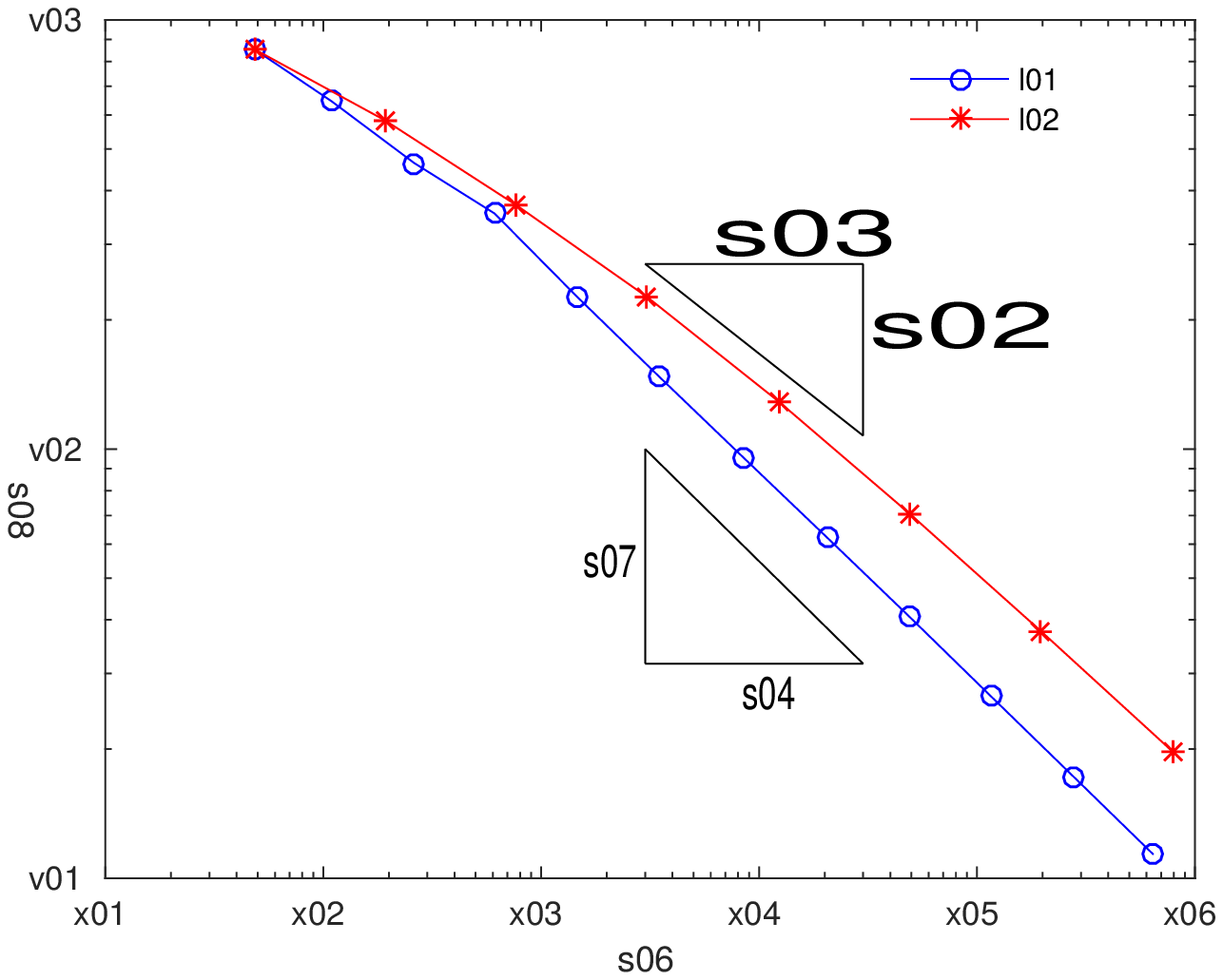}}%
\end{psfrags}%
\end{center}
\caption{\label{fig:practicalerror}
 The error estimators $\eta$ for the example in~\cref{subsec:bsp3} 
in the case of uniform (uni) and adaptive (ada) mesh refinement. Again we do not have the optimal convergence rate
in the uniform case.}
\end{figure}

 \begin{figure}
 \centering
	\subfigure[$\#\TT^{(4)}=1447$.]{
	\includegraphics[width=0.35\textwidth]{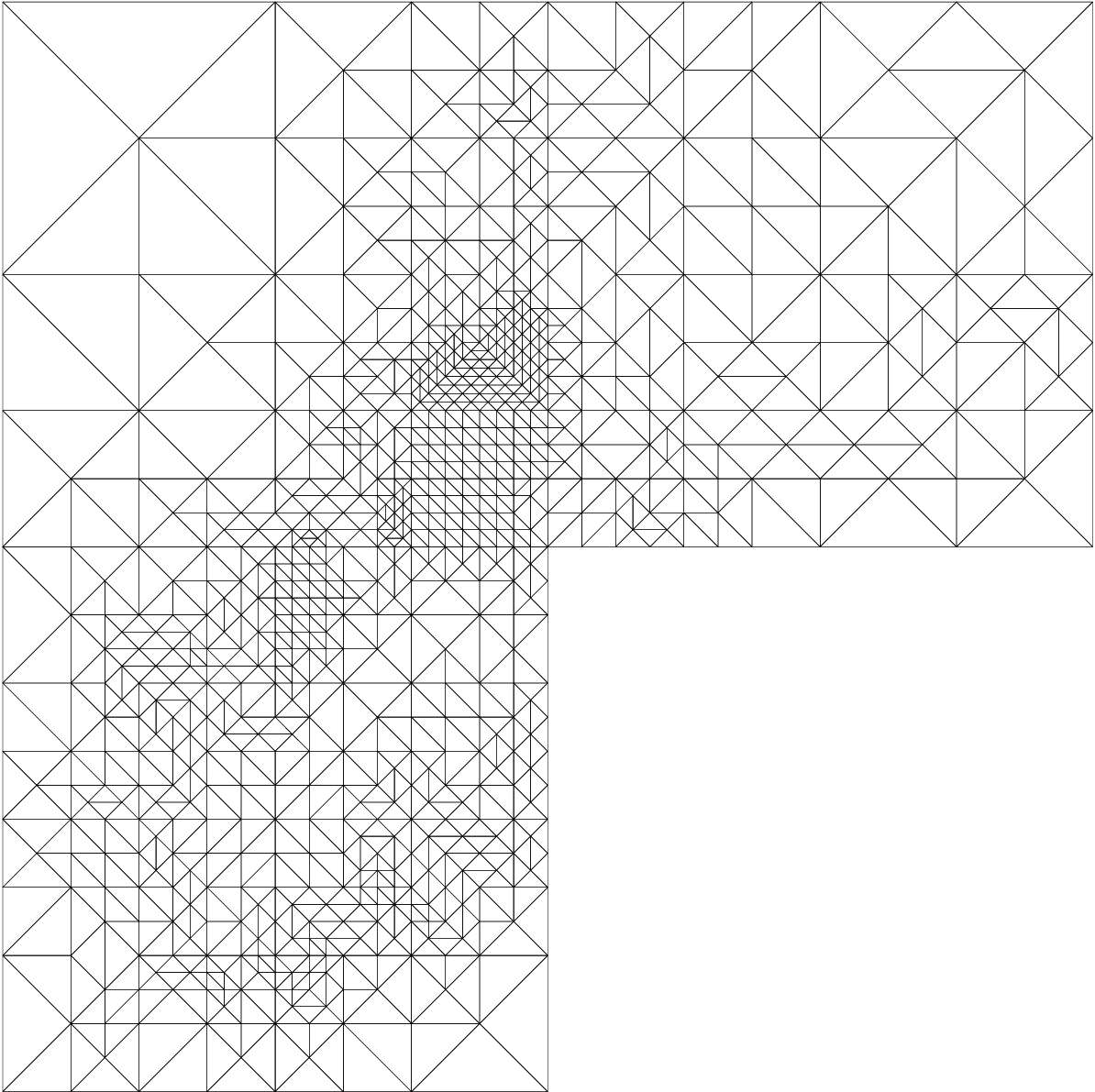}}
	\hspace{0.1\textwidth}
	\subfigure[$\#\TT^{(6)}=8451$.]{
	\includegraphics[width=0.35\textwidth]{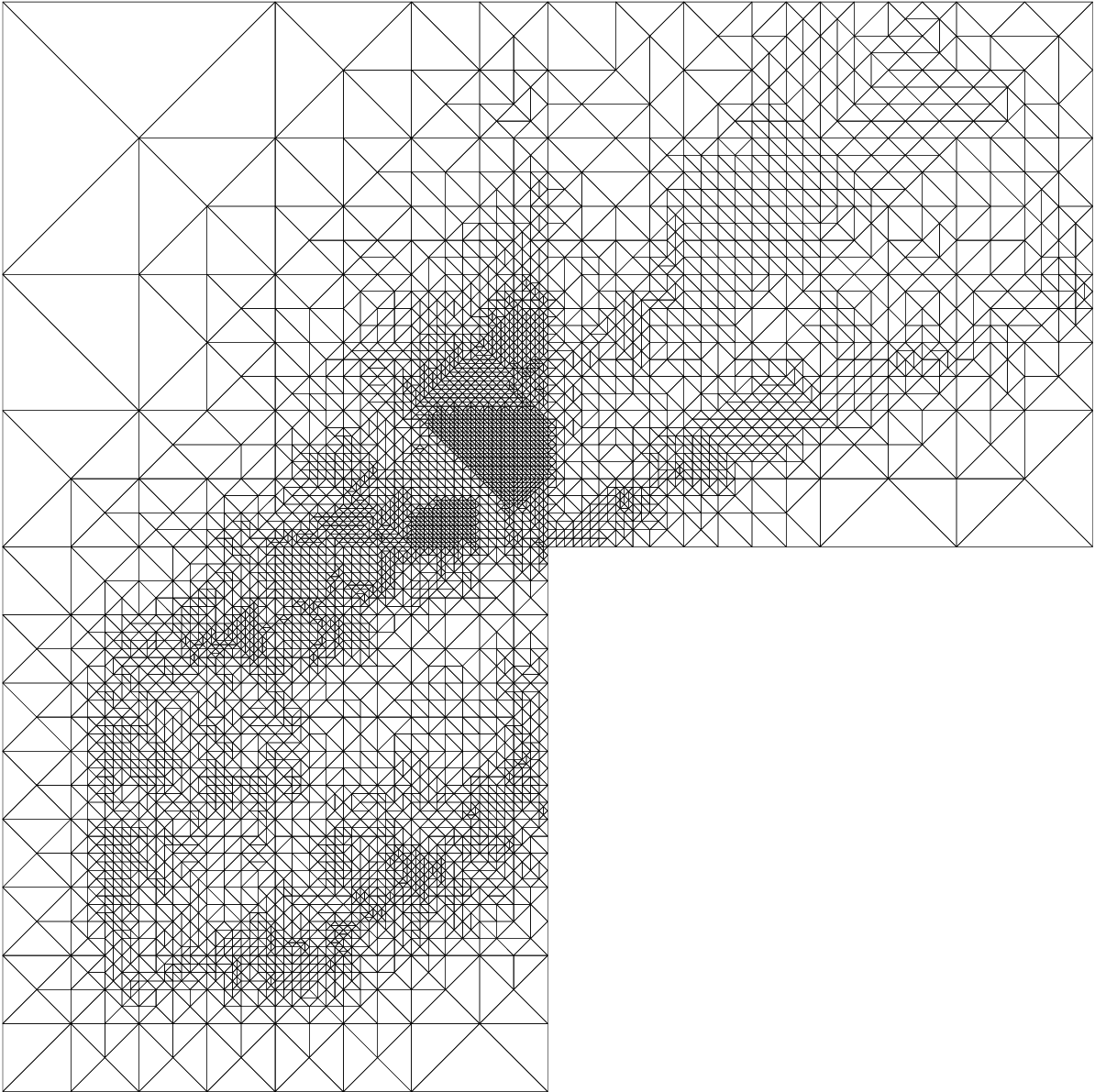}}
 \caption{\label{fig:practicalMesh}
 Two adaptively refined meshes $\TT^{(4)}$ and $\TT^{(6)}$ of the fifth and seventh step
 of the example in~\cref{subsec:bsp3}. The refinement mainly takes place along the convection direction.}
\end{figure} 

  \begin{figure}
 \centering
	\includegraphics[scale=0.6]{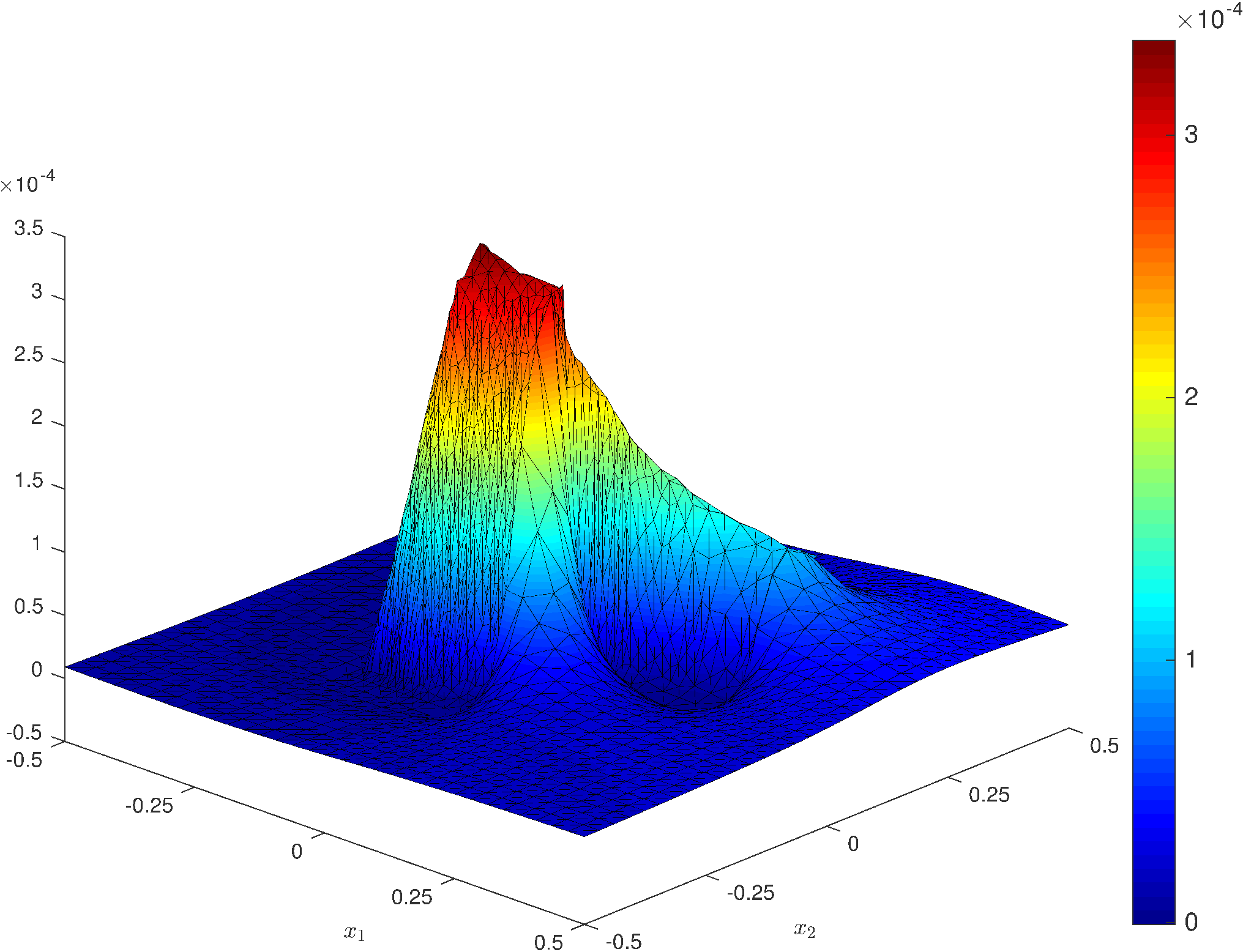}
 \caption{\label{fig:practicalSolution}
 The computed solution for the example in \cref{subsec:bsp3} on an
 adaptively generated mesh $\TT^{(4)}$ with 3471 elements.}
\end{figure}

\subsection{A more practical problem}
\label{subsec:bsp3}
For the third example we do not know an analytical solution of \cref{eq:model}.
Additionally, we replace the radiation condition~\cref{eq3:model} by
$u_e(x)=a_\infty+\OO(1/|x|)$ for $|x|\to \infty$. Thus we have
to assume the scaling condition (in two dimensions)
\begin{align*}
	\spe{\partial u_e/\partial\normal}{1}{\Gamma}=0,
\end{align*}
see~\cite{Erath:2013-1}.
The constant $a_\infty$ has to be added to the representation formula.
So we have the additional term $\spe{\psi_h}{a_\infty}{\Gamma}$ on the
left-hand side of~\eqref{eq2:fvm} and we add an
equation that ensures $\spe{1}{\phi_h}{\Gamma}=0$.
The domain will be the classical L-shaped domain as in the example in~\cref{subsec:bsp1}.
We fix the piecewise constant diffusion coefficients to 
\begin{align*}
 \alpha=\begin{cases} 0.5 \quad &\text{for}~ x_1 > 0, \\
10 \quad &\text{for}~ x_2\leq 0,\\
50 \quad &\text{else,} \end{cases}
\end{align*}
$\b=(15000,10000)^T$, and $\r=0.01$. The right-hand side will be
\begin{align*}
 f(x_1,x_2)=\begin{cases} 50 \quad &\text{for}~-0.2\leq x_1\leq -0.1,\quad -0.2\leq x_2\leq -0.05, \\
0 \quad &\text{else,} \end{cases}
\end{align*}
and the jumps $t_0$ and $u_0$ are set to zero.
This problem is again convection dominated. Therefore, we use the
full upwind stabilization \cref{eq:upwind}.
The convergence rate of the error estimator 
is plotted in~\cref{fig:practicalerror}. We observe 
a suboptimal convergence order $\OO(N^{-2/5})$ in the uniform case. However, we can again recover
the order $\OO(N^{-1/2})$ with our adaptive strategy. Note that since we have proven 
the reliability and the efficiency of our error estimator, these rates depict the convergence
behaviour of the error.
Adaptively generated meshes $\TT^{(4)}$ and $\TT^{(6)}$ from a start mesh with $48$ elements
are shown in~\cref{fig:practicalMesh}. The plots show
that the mesh is the finest along the direction of the convection.
Finally, in \cref{fig:practicalSolution} we see the interior and parts of the exterior
discrete solution. The interior transport problem influences the exterior part, which
describes a diffusion process, and the solution is continuous over the coupling boundary $\Gamma$.

\section{Conclusions}

This work provides an a~posteriori error estimator for the non-symmetric FVM-BEM coupling discretization
of~\cite{Erath:2015-2}.
The error estimator bounds the error from 
above and, under some restrictions on the mesh, also from below.
Additional assumptions even allow the construction of a robust error estimator, where the
upper bound is fully robust against variation of the model data. 
Note that the upper estimate only holds if the diffusion is above a certain (theoretical) bound.
The lower bound, however, additionally
depends on the P\'eclet number. 
The analysis relies on an ellipticity estimate in the energy (semi)norm and therefore differs from the
a~posteriori analysis of the three field FVM-BEM coupling in~\cite{Erath:2013-1}.
We think that this work and~\cite{Erath:2013-1} complete the residual based a~posteriori
error estimation theory for vertex-centered FVM-BEM couplings. Hence, it should be possible
to transfer the results directly to Bielak-MacCamy or the symmetric coupling approaches.

\section*{Acknowledgements}
The authors gratefully acknowledge G\"unther Of (TU Graz, Austria) for his valuable hints
to show Lemma~\ref{lem:robustellipt}.
The work of the second author is supported by the 'Excellence Initiative' of the German Federal and State Governments 
and the Graduate School of Computational Engineering at Technische Universitß\"at Darmstadt.

\bibliographystyle{alpha}

\bibliography{literature}

\end{document}